\newcommand{\C}{\mathbb C}
\newcommand{\R}{\mathbb R}
\newcommand{\Z}{\mathbb Z}
\newcommand{\eps}{\varepsilon}
\newcommand{\set}[1]{\left\{#1\right\}}
\newtheorem{theorem}{Theorem}[section]
\newtheorem{lemma}[theorem]{Lemma}
\newtheorem{prop}[theorem]{Proposition}
\theoremstyle{definition}
\newtheorem{definition}[theorem]{Definition}
\newtheorem{rem}[theorem]{Remark}
\newtheorem{rems}[theorem]{Remarks}
\theoremstyle{remark}
\numberwithin{equation}{section}
\begin{document}

\title{On the existence of hylomorphic vortices in the nonlinear Klein-Gordon equation }
\author{J. Bellazzini \thanks{ Universit\`a degli Studi di Sassari, Via
Piandanna 4, Sassari, Italy. e-mail: \texttt{jbellazzini@uniss.it}}  \and V. Benci \thanks{Dipartimento di Matematica, Universit\`a di Pisa, Largo Bruno Pontecorvo 5, Pisa, Italy, and Department of Mathematics, College of Science, King Saud University, Riyadh, 11451, Saudi Arabia. email:\texttt{benci@dma.unipi.it}} \and C. Bonanno \thanks{
Dipartimento di Matematica, Universit\`a di Pisa, Largo Bruno Pontecorvo 5, Pisa, Italy. e-mail: \texttt{bonanno@dm.unipi.it}} \and E. Sinibaldi \thanks{
Istituto Italiano di Tecnologia, Center for Micro-BioRobotics@SSSA,
Viale R. Piaggio 34, Pontedera, Italy. e-mail: \texttt{
edoardo.sinibaldi@iit.it}}}
\date{}
\maketitle

\abstract{In this paper we prove the existence of vortices, namely standing waves with non null angular momentum, for the nonlinear 
Klein-Gordon equation in dimension $N\geq 3$. We show with variational methods that the existence of these kind of solutions, that we have 
called \emph{hylomorphic vortices},
depends on a suitable energy-charge ratio. Our variational approach turns out to be useful for numerical investigations as well.
In particular, some results in dimension $N=2$ are reported, namely exemplificative vortex profiles by varying charge and angular momentum,
together with relevant trends for vortex frequency and energy-charge ratio. The stability problem for hylomorphic vortices is
also addressed. In the absence of conclusive analytical results, vortex evolution is numerically investigated: the obtained
results suggest that, contrarily to solitons with null angular momentum, vortex are unstable. 

\section{Introduction}

Roughly speaking, a \textit{vortex} is a \textit{solitary wave} $\psi $\
with non-vanishing angular momentum $\mathbf{M}\left( \psi \right)$.
A \textit{solitary wave} is a solution of a field equation whose energy
is localized and which preserves this localization in time. The vortices in
the Klein-Gordon equation (KG) are also considered in the Physics literature
with the name of \textit{spinning Q-balls }even if they do not exhibit
spherical symmetry (see e.g. \cite{VW}, \cite{caru}).

We recall also some existence results of solitary waves and vortices for KG:

\begin{itemize}
\item For the case $\mathbf{M}$ $\left( \psi \right) =0,$ we recall the
pioneering paper of Rosen \cite{rosen68} and \cite{Coleman78}, \cite{strauss},
\cite{Beres-Lions}. The
spherically symmetric solitary waves have been called $Q$\textit{-balls} by
Coleman in \cite{Coleman86} and this is the name used in the Physics
literature.

\item Vortices for KG in two space dimensions have been investigated in \cite%
{Kim93}; later also three dimensional vortices for KG have been studied (see 
\cite{Be-Visc}, \cite{bad}, \cite{VW}, \cite{caru},\cite{bebo}).
\end{itemize}

The aim of this paper is twofold. First, we give some existence results in situations not considered in the literature, namely we prove the existence of vortices in the case $N\ge 3$. For the $N=2$ case see \cite{befo09}.
We introduce a new method, based on the energy minimization, which allows to find
vortices with a prescribed charge in a suitable class of functions
(charge is defined in Section \ref{sec:NKG})\footnote{While we were writing this paper, 
a similar existence result appeared in \cite{BR12} for the $N=3$ case under slightly less general assumptions.}.  

Second, by taking advantage of this method,  a numerical investigation is presented.
In particular, some results in dimension $N=2$ are reported, namely exemplificative vortex profiles by varying charge and angular momentum,
together with relevant trends for vortex frequency and energy-charge ratio.  Also, we discuss the
stability of these vortices; such a problem is currently open, but we
present some numerical simulations which suggest that the vortices are unstable. In particular we show that stability cannot be proved by
using the standard methods.  In fact, in order to establish orbital stability, usually one uses the fact that solutions are 
minimizers of the energy functional on the manifold of fixed charge. Unfortunately it turns out that this is not the case, as we show in Theorem \ref{prop-not-min}; actually our vortices are minimizers only in a suitable class of functions.  

This paper is an extension of \cite{bel}, where we addressed the same problem for solitary waves with $\mathbf{M}$ $\left( \psi \right) =0$.

\begin{rem}
In many situations, as in this paper, the existence of stable structures such
as solitary waves and/or vortices is obtained by minimising the energy
(which is defined in Section \ref{sec:NKG})
over a class of configurations of a given charge.
If such a minimizing configuration exists, we may think that there
is a force binding the ``matter'' (see \cite{be09} for details). The
corresponding solitary waves have been called hylomorphic in \cite{bel} (see also 
\cite{befo09}), by merging the Greek words
``\textit{hyle}''=``\textit{matter}'' and
``\textit{morphe}''=``\textit{form}''.
For this reason, the solutions considered in the present paper
can be called ``hylomorphic vortices''.
\end{rem}

The paper is organized as follows: in Section \ref{sec:NKG} we prove the existence of vortices in the case $N\ge 3$ under general assumptions. Moreover the stability problem for the vortices solutions to \eqref{KG} is addressed in subsection \ref{sec:stab-th}. Eventually the numerical investigation is presented in Section \ref{sec:num-vort}. 
\section{The nonlinear Klein-Gordon equation (NKG)} \label{sec:NKG}

Let us consider the nonlinear Klein-Gordon equation (NKG) 
\begin{equation}
\square \psi +W^{\prime }(\psi )=0  \tag{NKG}  \label{KG}
\end{equation}
where $\square =\partial _{t}^{2}-\Delta$, $\psi :\mathbb{R}^{N}\to \mathbb{C}$ with $N\geq 2$, and $W:\mathbb{C}\to \mathbb{R}$ with 
\begin{equation}
W(\psi )=F(|\psi |),\qquad W^{\prime }(\psi )=F^{\prime }(\left\vert \psi \right\vert )\frac{\psi }{\left\vert \psi \right\vert }  \label{www}
\end{equation}
for some smooth function $F:\mathbb{R}^{+}\to \mathbb{R}$. Equation (\ref{KG}) is the Euler-Lagrange equation of the action functional $\mathcal{S}=\int\mathcal{L}dxdt$ with Lagrangian density
$$
\mathcal{L}(\psi,\partial_{t}\psi, \nabla \psi) = \frac{1}{2}\left\vert \partial _{t}\psi \right\vert ^{2}-\frac{1}{2}|\nabla \psi |^{2}-W(\psi). \label{lagra}
$$
It is useful to write $\psi$ in polar form, namely 
$$
\psi(t,x)=u(t,x)e^{iS(t,x)}   \label{polar}
$$
where $u(t,x)\in\mathbb{R}^{+}$ and $S(t,x)\in\mathbb{R}/(2\pi\mathbb{Z})$. If we set $u_{t}=\partial_{t}u$ and $S_{t}= \partial_{t}S$, the state ${\psi}$ is uniquely defined by $(u,u_{t},S_{t}, \nabla u, \nabla S)$. Using these variables, the Lagrangian density $\mathcal{L}$ takes the form 
$$
\mathcal{L}(u,u_{t},S_{t},\nabla u, \nabla S)=\frac{1}{2} \left[
u_{t}^{2}-\left\vert \nabla u\right\vert ^{2}+\left( S_{t}^{2}-|\nabla S|^{2}\right) u^{2}\right] - F(u)
$$
and equation (\ref{KG}) is equivalent to the system
$$
\square u+\left( |\nabla S|^{2}-S_{t}^{2}\right) u+F^{\prime}(u)=0 
$$
$$
\partial_{t}\left( u^{2} S_{t} \right) +\nabla\cdot\left(u^{2} \nabla S \right) =0. 
$$

Noether's theorem states that any invariance for a one-parameter group of
the Lagrangian density implies the existence of an integral of motion, namely of a
quantity on solutions which is preserved with time. Thus equation (\ref{KG}) has for $N=3$ ten integrals: energy, momentum, angular momentum and ergocenter velocity. Moreover, another integral is given by the gauge invariance: charge. Easy computations show that the integrals of motion have the following expressions with respect to the variables $(\psi, \partial_{t} \psi,\nabla \psi)$ or $(u,u_{t},S_{t},\nabla u, \nabla S)$:
\begin{itemize}
\item Energy:
\begin{equation} \label{energy}
\begin{array}{rl}
\mathcal{E} & = \int\left[ \frac{1}{2}\left\vert \partial_{t}\psi\right\vert
^{2}+\frac{1}{2}\left\vert \nabla\psi\right\vert ^{2}+W(\psi)\right] dx \\[0.3cm]
 & = \int\left[ \frac{1}{2}\left( u_{t}\right) ^{2}+\frac {1}{2}\left| \nabla u\right| ^{2}+\frac{1}{2}\left[ S_{t}^{2}+\left| \nabla S\right| ^{2}\right] u^{2}+F(u)\right] dx
 \end{array}
\end{equation}

\item Momentum:  
$$
\mathbf{P}=-\mathrm{Re} \int\partial_{t}\psi\overline{\nabla\psi}\; dx = -\int\left( u_{t}\,\nabla u+S_{t}\,\nabla S\;u^{2}\right) \;dx
$$

\item Angular momentum:
\begin{equation} \label{amomentum}
\mathbf{M}=\mathrm{Re}\int\mathbf{x}\times\nabla\psi\overline
{\partial_{t}\psi}\;dx = \int\left( \mathbf{x}\times\nabla S\ S_{t}\ u^{2}+\mathbf{x}\times\nabla u\,u_{t}\right) \;dx   
\end{equation}

\item Ergocenter velocity:
$$
\mathbf{K}=t\mathbf{P}-\int \mathbf{x}\left[ \frac{1}{2}\left\vert \partial
_{t}\psi \right\vert ^{2}+\frac{1}{2}\left\vert \nabla \psi \right\vert
^{2}+W(\psi )\right] dx 
$$

\item Charge:
\begin{equation} \label{cha}
\mathcal{H}= \mathrm{Im}\int\partial_{t}\psi\overline{\psi}\;dx = \int S_{t}\,u^{2}dx.
\end{equation}
\end{itemize}

A method to obtain vortices or spinning $Q$-balls is to write $x \in \R^N$ as $x=(y,z) \in \R^2
\times \R^{N-2}$ and to look for solutions of the form
\begin{equation} \label{ansatz}
\psi(t,x)=u(x)\, e^{\imath(\ell\, \theta(y)-\omega t)}, \qquad u \geq
0,\ \omega \in \R, \ \ell \in \Z
\end{equation}
where 
$$
\theta(y) = \mathrm{Im}\, \log\left( y_{1}+\imath y_{2}\right) \in \R/2\pi \Z 
$$ 
is the angular variable in the plane $(y_1,y_2)$, for which we set $r^{2}:= y_{1}^{2}+y_{2}^{2}$.
With this ansatz, the NKG reduces to
\begin{equation} \label{nkg-static}
-\Delta u + \left( \ell^2|\nabla \theta|^{2} -\omega^{2} \right) u+ F'(u)=0
\end{equation}
\begin{equation} \label{nkg-static-2}
u \Delta \theta + 2 \nabla u \cdot \nabla \theta=0.
\end{equation}
By definition the function $\theta$ satisfies
$$
\Delta \theta = 0, \qquad \nabla \theta = \left( -\frac{y_{2}}{r^{2}}, \frac{y_{1}}{r^{2}}, 0,\dots,0 \right), \qquad |\nabla \theta| = \frac 1r
$$
and if we assume that 
$$
u(x) = u(r,z)
$$
then (\ref{nkg-static-2}) is satisfied. For solutions of the form \eqref{ansatz}, energy \eqref{energy} and charge \eqref{cha} become
\begin{equation} \label{en-static}
E(u,\omega,\ell)=\int\left[ \frac {1}{2}\left| \nabla u\right| ^{2}+\frac{1}{2}\left[ \omega^{2}+ \frac{\ell^2}{r^2} \right] u^{2}+F(u)\right] dx
\end{equation}
\begin{equation} \label{ch-static}
H(u,\omega)= - \int \omega\, u^{2} \;dx
\end{equation}
Moreover, in the particular case $N=3$, the angular momentum \eqref{amomentum} becomes
\begin{equation} \label{mom-static}
\mathbf{M}(u,\omega,\ell)= \left(0, 0, - \int \ell\, \omega\, u^{2} \;dx \right)=(0,0,\ell H(u,\omega))
\end{equation}
which is a non-vanishing vector if $\ell\not=0$ and $\mathcal{H}(u,\omega) \not= 0$. For this reason solutions (\ref{ansatz}) are called vortices or spinning Q-balls.

\subsection{Existence results for vortices for $N\geq 3$} \label{sec:ex-vort}

Letting $x=(y,z) \in \R^2 \times \R^{N-2}$ and $r:= \sqrt{y_{1}^{2}+y_{2}^{2}}$
we consider the space of cylindrically symmetric functions of the form $u(x) = u(r,z)$. In particular, letting $\mathcal{O}$ be an open subset of $\R^{N-2}$ we consider the Hilbert space 
$$
\tilde H^{1}_{c}(\R^2\times \mathcal{O}) = \overline{\set{\varphi(x) = \varphi(y,z)\, : \, \varphi \in C^{\infty}_{0}((\R^2\setminus \{0\}) \times \mathcal{O})}}^{\|\cdot\|_{\tilde H^{1}_{c}}}
$$
obtained as the closure of cylindrically symmetric functions in $C^{\infty}_{0}((\R^2\setminus \{0\}) \times \mathcal{O})$ with respect to the norm 
$$\|u\|_{\tilde H^{1}_{c}}:=\int_{\R^2\times \mathcal{O}}\, \left[ |\nabla u|^2+ \left( 1 + \frac{1}{r^{2}} \right) |u|^2 \right] dx.$$
We denote by $\| \cdot \|_{H^{1}}$ and $\| \cdot \|_{2}$ the standard norms of the spaces $H^{1}(\R^{N})$ and $L^{2}(\R^{N})$.

We remark that the cylindrical symmetry is sufficient to recover enough compactness thanks to the following lemma which is an immediate consequence of a result of Esteban-Lions \cite{el}
\begin{lemma} \label{compactness}[Compactess Lemma]
Let  $\mathcal{O}$ a bounded open subset of $\R^{N-2}$.
Then the embedding $\tilde H^1_{c}(\R^2 \times \mathcal{O}) \hookrightarrow L^p(\R^N)$ is compact for all $p\in (2,2^{*})$.
\end{lemma}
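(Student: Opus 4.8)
The plan is to prove compactness through the standard localization argument: on a large bounded cylinder one gains compactness from Rellich--Kondrachov, while the exterior is controlled uniformly by the radial decay that the planar symmetry forces on functions of $\tilde H^1_{c}$, which is precisely the mechanism behind the Esteban--Lions estimate \cite{el}. As a preliminary observation, since $\|u\|_{H^{1}} \le \|u\|_{\tilde H^{1}_{c}}$ one has $\tilde H^{1}_{c}(\R^2\times\mathcal O) \hookrightarrow H^{1}(\R^N)$ continuously, so by the Sobolev inequality the embedding into $L^p(\R^N)$ is at least continuous for every $p \in [2,2^{*}]$; only the compactness on the open interval requires the symmetry. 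Note also that the weight $\frac{1}{r^{2}}$ in the norm only shrinks the space, so it suffices to establish compactness on the larger space of cylindrically symmetric $H^{1}(\R^N)$ functions, from which the claim is inherited a fortiori.

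Now I would take a bounded sequence $\set{u_n} \subset \tilde H^{1}_{c}$ and, after passing to a subsequence, assume $u_n \rightharpoonup u$ weakly in $\tilde H^{1}_{c}$, hence weakly in $H^{1}(\R^N)$ and almost everywhere. For $R>0$ set $C_R := \set{(y,z)\,:\,|y|<R,\ z\in\mathcal O}$, a bounded domain of $\R^N$; on $C_R$ the classical Rellich--Kondrachov theorem gives $u_n \to u$ strongly in $L^p(C_R)$ for each fixed $R$. Everything therefore reduces to showing that the exterior tails $\int_{|y|>R}|u_n|^p\,dx$ are small uniformly in $n$ as $R\to\infty$ (the same bound applying to the weak limit $u$).

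This tail estimate is the heart of the matter, and the only place where the cylindrical symmetry is genuinely used. Writing $u=u(r,z)$ with $r=|y|$, so that $dx = 2\pi\, r\, dr\, dz$, integration of $\partial_r(r\,u^2)$ from $r$ to $\infty$ together with Cauchy--Schwarz yields a Strauss-type pointwise bound $r\,|u(r,z)|^2 \le \int_0^\infty \big(s\,|u|^2 + s\,|\partial_s u|^2\big)\,ds$, i.e.\ a decay $|u(r,z)| \lesssim r^{-1/2}$ weighted by the radial energy of the section at height $z$. Since $\mathcal O$ is bounded, integrating the interpolation $|u|^p \le (\sup_{r>R}|u|)^{p-2}\,|u|^2$ over $\set{r>R}\times\mathcal O$ and applying H\"older in the variable $z$ produces a bound of the form $\int_{|y|>R}|u|^p\,dx \le C\, R^{-\delta}\,\|u\|_{\tilde H^{1}_{c}}^{p}$ for some $\delta=\delta(p)>0$, which tends to $0$ uniformly on bounded sets as $R\to\infty$. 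Combining this uniform tail smallness with the local strong convergence on each $C_R$ gives $u_n \to u$ in $L^p(\R^N)$, which is the assertion. The main obstacle is exactly this decay step: compactness in $H^{1}(\R^N)$ fails in general because of translation invariance, and what rescues the argument is that the domain is unbounded only in the plane $(y_1,y_2)$, where the full rotational symmetry is available and forces the $r^{-1/2}$ decay, the variable $z$ being confined to the bounded set $\mathcal O$.
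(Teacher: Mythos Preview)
The paper does not actually prove this lemma: it merely records that the statement ``is an immediate consequence of a result of Esteban--Lions \cite{el}'' and gives no argument. Your proposal is therefore a sketch of the Esteban--Lions compactness result itself, and the overall strategy --- Rellich--Kondrachov on the bounded cylinder $C_{R}$ together with uniform smallness of the $L^{p}$ tails coming from radial decay in $y$ --- is the right mechanism.

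There is, however, a real gap in the tail step. After the slice-wise Strauss inequality $r\,|u(r,z)|^{2}\le g(z)$ with $g(z)=\frac{1}{2\pi}\int_{\R^{2}}(|u|^{2}+|\nabla_{y} u|^{2})\,dy$, your interpolation gives
\[
\int_{|y|>R}|u|^{p}\,dx \ \le\ C\,R^{-(p-2)/2}\int_{\mathcal O}g(z)^{(p-2)/2}\,a(z)\,dz \ \le\ C\,R^{-(p-2)/2}\int_{\mathcal O}g(z)^{p/2}\,dz,
\]
with $a(z)=\|u(\cdot,z)\|_{L^{2}(\R^{2})}^{2}\le g(z)$, and the claimed bound requires $\int_{\mathcal O}g^{p/2}\,dz\le C\|u\|_{\tilde H^{1}_{c}}^{p}$. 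H\"older in $z$ alone cannot deliver this: from the $\tilde H^{1}_{c}$ norm one only controls $g\in L^{1}(\mathcal O)$, and since $p/2>1$ there is no way to interpolate against the finite measure of $\mathcal O$. Closing the estimate genuinely needs the $\partial_{z}$-regularity of $u$ as well (for instance, $z\mapsto \|u(\cdot,z)\|_{L^{2}(\R^{2})}$ lies in $H^{1}(\mathcal O)$ and hence in higher $L^{q}$ via Sobolev on $\mathcal O$), and even then the contribution of $\nabla_{y}u$ to $g$ is not obviously handled, because its $z$-derivative involves mixed second derivatives of $u$ that $H^{1}$ does not control.

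A cleaner route, and the one usually associated with \cite{el}, bypasses the quantitative tail bound: assume $u_{n}\rightharpoonup 0$ in $\tilde H^{1}_{c}$ but $\|u_{n}\|_{L^{p}}\not\to 0$; Lions' vanishing lemma then produces points $(y_{n},z_{n})$ with $\int_{B_{1}(y_{n},z_{n})}|u_{n}|^{2}\ge\eta>0$. Boundedness of $\mathcal O$ pins down $z_{n}$, while the rotational symmetry in $y$ forces $|y_{n}|$ to stay bounded (otherwise $\sim|y_{n}|$ disjoint rotated copies of the ball each carry mass $\ge\eta$, contradicting $\sup_{n}\|u_{n}\|_{L^{2}}<\infty$). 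Rellich on a fixed bounded region then gives the contradiction.
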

As discussed above, we consider the cone of non-negative functions $u(x)$ which are cylindrically symmetric and in $\tilde H^{1}_{c}(\R^2\times \mathcal{O})$ with $\mathcal{O} = \R^{N-2}$. Hence we introduce the notation
$$
\tilde H^{1}_{c} := \set{u\in \tilde H^{1}_{c}(\R^2\times \R^{N-2})\ :\ u(x) \ge 0}
$$
We are thus led to prove existence of solutions $(u,\omega, \ell) \in \tilde H^{1}_{c}\times \R \times \Z$ to equation (\ref{nkg-static}) with finite energy (\ref{en-static}). Since we are interested in spinning solutions, we fix $\ell \not= 0$, and look for solutions $(u(\ell), \omega(\ell))$ with $\omega(\ell) \not= 0$. In the following we drop the $\ell$ dependence in notation. Without loss of generality we can restrict ourselves to the case $\omega<0$ (so that charge is positive).

First of all we remark that a variational approach to equation \eqref{nkg-static} in the space $\tilde H^{1}_{c}\times \R \times \Z$ gives the existence of solutions in a weak sense. Then, following for example the argument of Theorem 2.3 in \cite{bebo}, one can prove that any weak solution is also a solution in the sense of distributions. Hence we can look for weak solutions to \eqref{nkg-static}. Our variational approach is based on the following proposition, which is proved by a standard Lagrange multiplier argument (see \cite{bbbm} for an analogous result)

\begin{prop} \label{estr-vinc}
Let $\ell\in \Z \setminus \{0\}$ be fixed. A couple $(\bar u, \bar \omega) \in \tilde H^{1}_{c}\times \R^{-}$ is a weak solution to (\ref{nkg-static}) if and only if it is a critical point for the energy $E(u,\omega,\ell)$ constrained to the manifold $C_{h} := \{(u,\omega) \in \tilde H^{1}_{c}\times \R^-\, : \,  H(u,\omega) = H(\bar u, \bar \omega)  = h \not= 0 \}$ of fixed charge.
\end{prop}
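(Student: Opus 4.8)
The plan is to apply the Lagrange multiplier rule in the product space $\tilde H^{1}_{c}\times \R^{-}$, treating $E(\cdot,\cdot,\ell)$ (with $\ell$ fixed) and $H(\cdot,\cdot)$ as $C^{1}$ functionals of the pair $(u,\omega)$. Since $\bar\omega$ lies in the open half-line $\R^{-}$, variations in the $\omega$ variable are unconstrained. The first thing I would check is that $C_{h}$ is a genuine $C^{1}$ manifold near any admissible point: since $\partial_{\omega}H(u,\omega) = -\int u^{2}\,dx$ and $h\neq 0$ forces $\int u^{2}\,dx\neq 0$, the differential $H'(u,\omega)$ never vanishes on $C_{h}$, so the constraint is nondegenerate and the multiplier rule applies, i.e. $(\bar u,\bar\omega)$ is a constrained critical point if and only if there is $\lambda\in\R$ with $E'(\bar u,\bar\omega)=\lambda\,H'(\bar u,\bar\omega)$.

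Next I would write this identity out componentwise. Differentiating $E$ in $u$ along an admissible direction $\varphi$ and in $\omega$ gives
$$\partial_{u}E[\varphi] = \int\Big[\nabla u\cdot\nabla\varphi + \big(\omega^{2}+\tfrac{\ell^{2}}{r^{2}}\big)u\,\varphi + F'(u)\varphi\Big]\,dx, \qquad \partial_{\omega}E = \int\omega\,u^{2}\,dx,$$
while for the charge
$$\partial_{u}H[\varphi] = -\int 2\omega\,u\,\varphi\,dx, \qquad \partial_{\omega}H = -\int u^{2}\,dx.$$
The multiplier identity therefore splits into the scalar $\omega$-equation $\int\omega u^{2}\,dx = -\lambda\int u^{2}\,dx$ and the $u$-equation obtained by equating the two bracketed integrals tested against every admissible $\varphi$.

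The key algebraic step is that the $\omega$-equation pins down the multiplier explicitly: since $\omega$ is a constant and $\int u^{2}\,dx\neq 0$, it yields $\lambda = -\omega$. Substituting $\lambda=-\bar\omega$ into the $u$-equation, the coefficient of $u\varphi$ becomes $\omega^{2}+\ell^{2}/r^{2}-2\omega^{2} = \ell^{2}/r^{2}-\omega^{2}$, so the $u$-equation collapses to
$$\int\Big[\nabla\bar u\cdot\nabla\varphi + \big(\tfrac{\ell^{2}}{r^{2}}-\bar\omega^{2}\big)\bar u\,\varphi + F'(\bar u)\varphi\Big]\,dx = 0 \quad \text{for all admissible } \varphi,$$
which is precisely the weak form of (\ref{nkg-static}) once one recalls $|\nabla\theta|^{2}=1/r^{2}$. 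This computation is fully reversible: a weak solution $(\bar u,\bar\omega)$ satisfies the multiplier identity with $\lambda=-\bar\omega$, hence is a constrained critical point, and conversely, so both implications follow simultaneously.

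I expect the only genuine subtleties to be the ones outside the core computation: verifying that $E$ and $H$ are indeed $C^{1}$ on $\tilde H^{1}_{c}\times\R^{-}$ (which rests on the standing growth hypotheses on $F$ together with the compact embedding of Lemma \ref{compactness}, and on the fact that $\int(\ell^{2}/r^{2})u^{2}\,dx$ is controlled by $\|u\|_{\tilde H^{1}_{c}}^{2}$ by the very definition of the norm), and dealing with the cone constraint $u\geq 0$, i.e. confirming that at the solutions of interest the admissible variations $\varphi$ span enough of $\tilde H^{1}_{c}$ to recover the full weak equation. Apart from these book-keeping points, the statement is a direct Lagrange multiplier argument whose only nonroutine ingredient is the clean identification $\lambda=-\omega$.
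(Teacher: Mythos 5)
Your argument is correct and is precisely the ``standard Lagrange multiplier argument'' that the paper invokes for this proposition (deferring details to \cite{bbbm}): the nondegeneracy of the constraint via $\partial_{\omega}H=-\int u^{2}\,dx\neq 0$, the identification $\lambda=-\bar\omega$ from the $\omega$-component, and the resulting collapse of the $u$-equation to the weak form of (\ref{nkg-static}) all match the intended proof. The subtleties you flag ($C^{1}$ regularity of the functionals and the cone constraint $u\ge 0$) are likewise left implicit by the paper, so nothing is missing relative to its treatment.
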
 

We have thus obtained a very simple criterion to prove existence of vortices, we just need to prove the existence of constrained critical points for $E$ to $C_{h}$. This problem can be solved by adapting existence results in \cite{bebo} to this case, along the ideas introduced in \cite{bbbm}, or by applying more general results from \cite{befo09}. However we now give a novel proof, which permits to numerically construct the vortices as described in Section \ref{sec:num-vort}.

Let us discuss the assumptions on the nonlinear term $W$ in \eqref{KG}. We write $W$ as 
\begin{equation}
W(\psi)= F(|\psi|) \quad \text{with} \quad F(s)=\frac{m^{2}}{2}s^{2}+N(s), \quad s\ge 0   \label{NN}
\end{equation}
and assume that
\begin{itemize}
\item (W-i)\textbf{(Positivity}) $F(s)\ge 0$;

\item (W-ii)\textbf{(Nondegeneracy}) $F(s)$ is $C^{2}$ with $F(0)=F^{\prime}(0)=0$, $F^{\prime\prime}(0)=m^{2} > 0$;

\item (W-iii)\textbf{(Hylomorphy}) there exists $s_{0}>0$ such that $N(s_{0})<0$;

\item (W-iv)\textbf{(Growth condition}) at least one of the following holds:

\begin{itemize}
\item (a) there are constants $a,b>0$ and $2<p\le q<2N/(N-2)$ such that for any $s>0$
$$
|N^{\prime}(s)| \le as^{p-1}+bs^{q-1}. 
$$

\item (b) there exists $s_{1}>s_{0}$ such that $N^{\prime}(s_{1})\ge 0$.
\end{itemize}
\end{itemize}

\begin{rems}
We make some comments on assumptions (W-i), (W-ii), (W-iii), (W-iiii).

(W-i) By \eqref{en-static}, (W-i) implies that the energy is positive.

(W-ii) The necessary condition for the existence of solitary waves for \eqref{KG} is $F^{\prime\prime}(0)\ge 0$. Results for the null-mass case, $F^{\prime\prime}(0)=0$, are obtained in e.g. \cite{Beres-Lions} and \cite{BBR07}. Here we consider only the positive-mass case.

(W-iii) This is the crucial assumption which characterizes the potentials
which might produce concentrated solutions. As we will see, this assumption
permits to have states $\psi$ with hylomorphy ratio $\Lambda\left(\psi\right) <m$ (see \eqref{hylo} below).

(W-iv)(a) This assumption contains the usual growth condition at infinity
which guarantees the $C^{1}$ regularity of the functional. If we assume alternatively (W-iv)(b), the growth condition (W-iv)(a) can
be recovered by using standard tricks (see \cite{bbbm}).
\end{rems}

Our main existence results is the following
\begin{theorem}\label{stand}
Under assumptions (W-i)-(W-iv) and for any fixed $\ell\in \Z\setminus \{0\}$, there exists $h_{0}\in \R^{+}$ such that for all $h\ge h_{0}$ the nonlinear Klein-Gordon equation \eqref{KG} admits a vortex solution $\psi(t,x)$ of the form \eqref{ansatz} with finite energy \eqref{energy}, charge \eqref{cha} $\mathcal{H} = h$ and, for $N=3$, non-vanishing angular momentum \eqref{amomentum}.
\end{theorem}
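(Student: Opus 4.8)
The plan is to use Proposition~\ref{estr-vinc} to turn the existence problem into a constrained minimization: it suffices to produce a minimizer of the energy $E(u,\omega,\ell)$ on the fixed-charge manifold $C_{h}$ for $h$ large. Since for every nontrivial $u$ the charge constraint $H(u,\omega)=-\omega\|u\|_{2}^{2}=h$ determines $\omega=-h/\|u\|_{2}^{2}<0$ uniquely, I would first eliminate $\omega$ and reduce to the minimization of the single reduced functional
\[
J(u)=\frac12\|\nabla u\|_{2}^{2}+\frac{\ell^{2}}{2}\int\frac{u^{2}}{r^{2}}\,dx+\int F(u)\,dx+\frac{h^{2}}{2\|u\|_{2}^{2}}
\]
over $u\in\tilde H^{1}_{c}\setminus\{0\}$, where the last term collects $\frac12\omega^{2}\|u\|_{2}^{2}$ after substitution. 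Writing $F(u)=\frac{m^{2}}{2}u^{2}+N(u)$ as in \eqref{NN} and using the elementary bound $\frac{m^{2}}{2}\|u\|_{2}^{2}+\frac{h^{2}}{2\|u\|_{2}^{2}}\ge mh$, one sees that $mh$ is the natural threshold for the infimum $I(h):=\inf J$.

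Next I would establish that $J$ is bounded below and that minimizing sequences are bounded in $\tilde H^{1}_{c}$. Positivity (W-i) gives $\int F(u)\ge0$, so $J\ge h^{2}/(2\|u\|_{2}^{2})$ and the charge term keeps $\|u\|_{2}$ away from $0$ along minimizing sequences; the growth condition (W-iv)(a), via $|N(s)|\le C(s^{p}+s^{q})$ with $2<p\le q<2^{*}$ and the Gagliardo--Nirenberg inequality (whose exponent on $\|u\|_{2}$ equals $N+p\tfrac{2-N}{2}$, hence lies strictly below $2$ in this subcritical range), absorbs the possibly negative contribution of $N$ and bounds $\|u\|_{2}$, $\|\nabla u\|_{2}$ and $\int u^{2}/r^{2}\,dx$ from above. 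Thus minimizing sequences are bounded in $\tilde H^{1}_{c}$.

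The decisive ingredient is the strict threshold estimate $I(h)<mh$ for $h$ large, which is exactly where the hylomorphy assumption (W-iii) enters and which fixes the value $h_{0}$. Here I would build a test configuration concentrated on an annular region at large radius $r\simeq R_{0}$ in the $(y_{1},y_{2})$-plane (to keep the centrifugal term $\frac{\ell^{2}}{2}\int u^{2}/r^{2}$ controlled) with amplitude near $s_{0}$ on a region of volume $|\Omega|\simeq h/(ms_{0}^{2})$, so that $\|u\|_{2}^{2}\simeq h/m$ realizes equality in the AM--GM bound. For such a configuration $J\simeq mh+N(s_{0})\,|\Omega|+o(|\Omega|)$, and since $N(s_{0})<0$ the volume term is negative and dominates the subdominant gradient, centrifugal and boundary-layer corrections once $|\Omega|$ (equivalently $h$) is large enough, giving $I(h)<mh$ for $h\ge h_{0}$.

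Finally comes the compactness, which I expect to be the \emph{main obstacle}: $J$ is invariant under translations in $z\in\R^{N-2}$, and Lemma~\ref{compactness} yields compactness only on bounded $\mathcal{O}$, so a minimizing sequence may a priori drift to infinity or split in the $z$-direction. I would handle this by concentration--compactness in the $z$ variable. Vanishing is excluded because it would force $\int N(u_{n})\to0$ and hence $\liminf J\ge mh$, contradicting $I(h)<mh$; dichotomy is excluded by proving the strict subadditivity $I(h)<I(h_{1})+I(h-h_{1})$ for $0<h_{1}<h$, which follows from the below-threshold estimate and the scaling behaviour of $J$. (Equivalently, one may minimize on the balls $\mathcal{O}=B_{R}\subset\R^{N-2}$, where Lemma~\ref{compactness} applies directly and produces minimizers amenable to the numerics of Section~\ref{sec:num-vort}, and then let $R\to\infty$ using the uniform threshold bound to prevent escape of mass.) The surviving compact part, after translating back in $z$, gives a minimizer $\bar u\in\tilde H^{1}_{c}$, $\bar u\ge0$, $\bar u\neq0$. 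By Proposition~\ref{estr-vinc} the pair $(\bar u,\bar\omega)$ with $\bar\omega=-h/\|\bar u\|_{2}^{2}<0$ is a weak solution of \eqref{nkg-static}, and the regularity argument of Theorem~2.3 in \cite{bebo} upgrades it to a genuine vortex \eqref{ansatz} of \eqref{KG} with charge $h$ and, for $N=3$, angular momentum $\ell h\neq0$ by \eqref{mom-static}, completing the proof.
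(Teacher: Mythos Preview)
Your overall plan coincides with the paper's: reduce via Proposition~\ref{estr-vinc} to minimizing the one-variable functional $J_{h}$ on $\tilde H^{1}_{c}\setminus\{0\}$, show $\inf J_{h}<mh$ for large $h$ by an annular test function at level $s_{0}$, obtain a priori bounds on minimizing sequences, and then recover compactness modulo $z$-translations. The paper packages the first two ingredients as Theorems~\ref{mezzo-1} and~\ref{mezzo-2} (your threshold argument is essentially Theorem~\ref{mezzo-2}; your a priori bounds are Lemma~\ref{apriori}, though the paper bounds $\|u_{n}\|_{2}$ from above by a different argument using $\int F(u_{n})$ bounded together with (W-ii) and Sobolev, rather than your Gagliardo--Nirenberg absorption).

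The one substantive divergence is the compactness step. The paper does \emph{not} run concentration--compactness directly on $J_{h}$. Instead it first identifies $\rho=\lim\|u_{n}\|_{2}^{2}$ along a minimizing sequence and proves the key identity $m(\ell,h)=G_{\rho}+\tfrac{m^{2}}{2}\rho+\tfrac{h^{2}}{2\rho}$, thereby transferring the problem to the $L^{2}$-constrained minimization of $G(v)=\int[\tfrac12|\nabla v|^{2}+\tfrac{\ell^{2}}{2r^{2}}v^{2}+N(v)]\,dx$ on $B_{\rho}$; compactness for \emph{that} problem is then imported as a black box (Lemma~\ref{jc}, from \cite{bebo}), which needs only $G_{\rho}<0$, a consequence of the threshold bound. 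This detour is what buys the paper a clean proof without having to verify any subadditivity here.

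Your direct approach is viable in principle, but the dichotomy exclusion as written has a gap. Dichotomy splits the $L^{2}$ mass, not the charge: if $u_{n}$ breaks into pieces with $\|v_{n}^{i}\|_{2}^{2}\to\alpha_{i}$, the relevant inequality to beat is $G_{\alpha_{1}}+G_{\alpha_{2}}>G_{\rho}$, not $I(h)<I(h_{1})+I(h-h_{1})$. One can relate the two by choosing $h_{i}=h\alpha_{i}/\rho$ (which minimizes $\sum h_{i}^{2}/(2\alpha_{i})$ under $h_{1}+h_{2}=h$ and makes the charge terms match exactly), but then you still owe a proof of strict subadditivity of $h\mapsto I(h)$, and ``the below-threshold estimate and the scaling behaviour of $J$'' yields at best monotonicity of $I(h)/h$, hence only the weak inequality. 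So either supply the strict subadditivity of $G_{\rho}$ (this is where the real work hides, and is precisely what Lemma~\ref{jc} encapsulates), or follow the paper and pass through the auxiliary $L^{2}$-constrained problem.
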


For fixed $\ell \in \Z \setminus \{0\}$, it is sufficient by Proposition \ref{estr-vinc} to show that the energy $E(u,\omega,\ell)$ has a point of minimum on the manifold $C_{h}$ for $h$ big enough. However, it is possible and useful for numerical aims to reduce the problem to the minimization of a one-variable functional. Namely, for fixed $\ell \in \Z \setminus \{0\}$, for all the couples $(u, \omega) \in C_h$ the energy functional $E(u, \omega, \ell)$ can be rewritten as
\begin{equation} \label{j-def}
J_{h}(u,\ell):= E(u,\omega(u,h),\ell) = \int\left[ \frac {1}{2}\left| \nabla u\right| ^{2}+\frac{1}{2}\frac{\ell^2}{r^2}u^{2}+N(u) \right] dx +\frac 12\left( \frac{h^2}{\|u\|_2^2}+m^2\| u\|_2^2\right)
\end{equation}
where $\ell$ and $h$ are parameters. The existence of a minimum for $E$ on $C_{h}$ is then equivalent to the existence of a minimum of $J_{h}$ on $\tilde H^{1}_{c} \setminus \{0\}$. 

Finally we introduce a fundamental tool in our variational approach, the quantity
\begin{equation}\label{hylo}
\Lambda(u, \omega):=\frac{E( u, \omega,\ell)}{H( u,  \omega)}
\end{equation}
that we called the \textit{hylomorphy ratio} (see \cite{bel}).

Theorem \ref{stand} follows from the two following results

\begin{theorem}\label{mezzo-1}
Under assumptions (W-i)-(W-iv) and for any fixed $\ell\in \Z\setminus \{0\}$, if there exists a  couple $(u, \omega) \in C_{h}$ such that $\Lambda(u, \omega)<m$ then $J_{h}$ has a point of minimum on $\tilde H^{1}_{c} \setminus \{0\}$.
\end{theorem}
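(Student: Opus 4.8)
The plan is to apply the direct method of the calculus of variations to the functional $J_{h}$ from \eqref{j-def}, the only genuine difficulty being the loss of compactness coming from translations in the $z$ variable (the domain $\mathcal{O}=\R^{N-2}$ is unbounded, so Lemma \ref{compactness} does not apply directly). First I record the reformulation of the hypothesis: for $(u,\omega)\in C_{h}$ one has $E(u,\omega,\ell)=\Lambda(u,\omega)\,H(u,\omega)=\Lambda(u,\omega)\,h$ and $J_{h}(u,\ell)=E(u,\omega(u,h),\ell)$, so by \eqref{hylo} the assumption $\Lambda(u,\omega)<m$ amounts to
$$m_{h}:=\inf_{\tilde H^{1}_{c}\setminus\{0\}}J_{h}(\cdot,\ell)<mh.$$
I would then fold $\tfrac{m^{2}}{2}\|u\|_{2}^{2}$ into $\int F(u)$ and write $J_{h}(u,\ell)=\tfrac12\|\nabla u\|_{2}^{2}+\tfrac{\ell^{2}}{2}\int\frac{u^{2}}{r^{2}}+\int F(u)+\tfrac{h^{2}}{2\|u\|_{2}^{2}}$, a sum of four nonnegative terms by (W-i); in particular $J_{h}\ge 0$ is bounded below, and along any minimizing sequence $u_{n}$ each of the four terms stays bounded.

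Next I would prove that $u_{n}$ is bounded in $\tilde H^{1}_{c}$. Boundedness of the first term controls $\|\nabla u_{n}\|_{2}$, of the second (here $\ell\neq 0$ is used) controls $\int u_{n}^{2}/r^{2}$, and the fourth keeps $\|u_{n}\|_{2}$ bounded away from $0$. The one delicate point is the upper bound on $\|u_{n}\|_{2}$: if $\|u_{n}\|_{2}\to\infty$ then, since $\|\nabla u_{n}\|_{2}$ is bounded, Gagliardo--Nirenberg together with (W-iv)(a) gives $|\int N(u_{n})|\le C(\|u_{n}\|_{p}^{p}+\|u_{n}\|_{q}^{q})=o(\|u_{n}\|_{2}^{2})$ --- this is exactly where $N\ge 3$ enters, since it forces the $L^{2}$-interpolation exponents of $\|u_{n}\|_{p}^{p}$ and $\|u_{n}\|_{q}^{q}$ to stay strictly below $2$ --- whence $\int F(u_{n})=\tfrac{m^{2}}{2}\|u_{n}\|_{2}^{2}+\int N(u_{n})\to+\infty$, contradicting the boundedness of $\int F(u_{n})$. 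Thus $u_{n}$ is bounded and, up to a subsequence, $u_{n}\rightharpoonup\bar u$ in $\tilde H^{1}_{c}$.

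Because $r=|y|$, the functional $J_{h}$ is invariant under translations of the $z$ variable, so I am free to replace $u_{n}$ by suitable $z$-translates $\tilde u_{n}$. I would first rule out vanishing: if $\sup_{\xi\in\R^{N-2}}\int_{\R^{2}\times B_{R}(\xi)}u_{n}^{2}\,dx\to 0$, then applying Lemma \ref{compactness} on the bounded slabs $\R^{2}\times B_{R}(\xi)$ in the manner of Lions gives $\|u_{n}\|_{p}\to 0$ for every $p\in(2,2^{*})$, hence $\int N(u_{n})\to 0$; by the arithmetic--geometric inequality $\tfrac12\big(\tfrac{h^{2}}{\|u_{n}\|_{2}^{2}}+m^{2}\|u_{n}\|_{2}^{2}\big)\ge mh$ this would force $m_{h}\ge mh$, contradicting $m_{h}<mh$. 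So vanishing cannot occur, and after centering the translations we obtain $\tilde u_{n}\rightharpoonup\bar u\neq 0$, with $\tilde u_{n}\to\bar u$ in $L^{p}_{\mathrm{loc}}$ by the same local compactness.

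The heart of the proof, and the step I expect to be the main obstacle, is excluding dichotomy. Setting $v_{n}=\tilde u_{n}-\bar u\rightharpoonup 0$, $\beta=\|\bar u\|_{2}^{2}$, $\sigma=\lim\|\tilde u_{n}\|_{2}^{2}$ and $\gamma=\sigma-\beta$, the Brezis--Lieb lemma splits the gradient, the $1/r^{2}$ and the $N$ terms additively. The charge term $\tfrac{h^{2}}{2\|\cdot\|_{2}^{2}}+\tfrac{m^{2}}{2}\|\cdot\|_{2}^{2}$ is not additive, but distributing the charge as $a_{1}=h\beta/\sigma$ and $a_{2}=h\gamma/\sigma$ (so that $a_{1}+a_{2}=h$) makes it split exactly, leading to $m_{h}\ge m_{a_{1}}+m_{a_{2}}$. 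To contradict this when $\gamma>0$ I would establish the strict subadditivity $m_{h}<m_{a_{1}}+m_{a_{2}}$, which in turn follows from the strict subhomogeneity $m_{\theta h}<\theta\,m_{h}$ for $\theta>1$. This last inequality is the genuinely delicate point: testing $J_{\theta h}$ with the amplitude rescaling $\sqrt{\theta}\,u$ of a minimizing sequence $u$ (keeping the same $\omega$) yields $m_{\theta h}\le\theta\,m_{h}+\int\big[N(\sqrt{\theta}\,u)-\theta N(u)\big]\,dx$, and one must show the last integral is strictly negative --- which is plausible because $m_{h}<mh$ forces $\int N(u)<0$ along minimizing sequences, placing them in the superquadratic, hylomorphic regime of (W-iii) where $N(\sqrt{\theta}\,s)<\theta N(s)$. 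Granting this, $\gamma=0$, so $\|\tilde u_{n}\|_{2}\to\|\bar u\|_{2}$; then weak lower semicontinuity of the convex terms, strong $L^{p}$ convergence for the $N$ term, and convergence of the charge term give $J_{h}(\bar u)\le\liminf J_{h}(\tilde u_{n})=m_{h}$, so the nonzero $\bar u$ is the desired point of minimum.
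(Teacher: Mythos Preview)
Your overall strategy---direct method with concentration--compactness on $J_{h}$---is different from the paper's, and the two diverge precisely at the step you flag as delicate. The paper does \emph{not} prove strict subadditivity for $J_{h}$. Instead, after obtaining the a~priori bounds and passing to a subsequence with $\|u_{n}\|_{2}^{2}\to\rho$, it observes that on such a sequence the charge term $\tfrac{h^{2}}{2\|u\|_{2}^{2}}+\tfrac{m^{2}}{2}\|u\|_{2}^{2}$ converges to the constant $\tfrac{h^{2}}{2\rho}+\tfrac{m^{2}}{2}\rho$, so minimizing $J_{h}$ is equivalent (via the rescaling $v_{n}=\tfrac{\sqrt{\rho}}{\|u_{n}\|_{2}}u_{n}$) to minimizing $G(v)=\int\big[\tfrac12|\nabla v|^{2}+\tfrac{\ell^{2}}{2r^{2}}v^{2}+N(v)\big]dx$ on the sphere $B_{\rho}$. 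The hypothesis $m_{h}<mh$ translates into $G_{\rho}<0$, and the paper then simply invokes Lemma~\ref{jc} (from \cite{bebo}) to get strong $\tilde H^{1}_{c}$-convergence of $v_{n}$, hence of $u_{n}$. So the paper buys compactness by reducing to an $L^{2}$-constrained problem where the hard work has already been done; your approach keeps the awkward nonlocal charge term and must prove subadditivity for $J_{h}$ from scratch.

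That is where your argument has a genuine gap. The amplitude test $\sqrt{\theta}\,u$ gives the exact identity $J_{\theta h}(\sqrt{\theta}\,u)=\theta J_{h}(u)+\int\big[N(\sqrt{\theta}\,u)-\theta N(u)\big]dx$, but the sign of the last integral is not controlled by (W-i)--(W-iv). Knowing $\int N(u)<0$ along minimizing sequences does not place the values of $u$ in any region where $s\mapsto N(s)/s^{2}$ is decreasing: under (W-iv)(a) one may have $N(s)\sim c\,s^{q}$ with $c>0$ and $q\in(2,2^{*})$ for large $s$, and then $N(\sqrt{\theta}\,s)-\theta N(s)\sim c(\theta^{q/2}-\theta)s^{q}>0$ on that range. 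So the integral can be positive and your subhomogeneity inequality may fail. If you want to salvage the direct route, the right test is a \emph{spatial} dilation rather than an amplitude one---this is exactly what makes the $L^{2}$-constrained functional $G$ strictly subhomogeneous (for $v_{\lambda}(x)=v(x/\lambda)$ one gets $G(v_{\lambda})-\theta G(v)=(\lambda^{N-2}-\theta)\cdot[\text{positive terms}]<0$ with $\lambda^{N}=\theta$, using $N\ge 3$), and is essentially what underlies Lemma~\ref{jc}. Either switch to this rescaling after first fixing the $L^{2}$-level as the paper does, or abandon the subadditivity route and follow the paper's reduction.

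Your boundedness argument via Gagliardo--Nirenberg is a clean alternative to the paper's Lemma~\ref{apriori}, and your non-vanishing step matches Lemma~\ref{nonvanish}; those parts are fine.
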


\begin{theorem}\label{mezzo-2}
Under assumptions (W-i)-(W-iv) and for any fixed $\ell\in \Z\setminus \{0\}$, there exists $h_{0}\in \R^{+}$ such that for all $h\ge h_{0}$
\begin{equation}\label{mainhylo2}
\inf_{(u,\omega)\in C_{h}} \Lambda(u,\omega) < m
\end{equation}
\end{theorem}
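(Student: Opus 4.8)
The goal is to exhibit, for each fixed $\ell \neq 0$ and for all sufficiently large charge $h$, a test configuration $(u,\omega) \in C_h$ whose hylomorphy ratio $\Lambda(u,\omega)$ is strictly below the mass $m$. The plan is to construct such a competitor explicitly and estimate $\Lambda$ directly.

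First I would recall that on the constraint manifold $C_h$ the frequency $\omega$ is determined by $u$ through the charge equation \eqref{ch-static}, namely $\omega(u,h) = -h/\|u\|_2^2$ (for $\omega<0$), and that the energy then reduces to the one-variable functional $J_h(u,\ell)$ in \eqref{j-def}. Hence $\Lambda(u,\omega) = J_h(u,\ell)/h$, and the task becomes showing that one can make $J_h(u,\ell) < m\,h$ for large $h$. The natural strategy is to pick a fixed profile $\bar u \in \tilde H^1_c \setminus \{0\}$ and then \emph{rescale its amplitude}, writing $u = \sigma \bar u$ for a scalar $\sigma>0$ to be chosen, while letting $h$ grow. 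Under this substitution the two terms in the second parenthesis of \eqref{j-def} become $\tfrac12\big(h^2/(\sigma^2\|\bar u\|_2^2) + m^2 \sigma^2 \|\bar u\|_2^2\big)$, whose minimum over $\sigma$ (for the given $h$) equals $m\,h$ and is attained at $\sigma^2 = h/(m\|\bar u\|_2^2)$. This is exactly the borderline value $m\,h$; the remaining gradient, angular, and $N(u)$ terms must therefore be shown to push $J_h$ strictly below $m\,h$.

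The decisive input is the hylomorphy assumption (W-iii): there is $s_0>0$ with $N(s_0)<0$. Choosing the amplitude so that $\bar u$ takes values near $s_0$ on a large region, the term $\int N(u)\,dx$ becomes negative and, crucially, scales \emph{like the volume} of the support, which one can make grow faster than the positive gradient and $\tfrac{\ell^2}{r^2}$ contributions. Concretely I would take a plateau-type profile that equals a constant $\simeq s_0$ on an annular cylindrical region $\{r \in [R, R+L],\ |z|\le L\}$ (staying away from $r=0$ so that the $\ell^2/r^2$ term stays finite) and decays on a boundary layer of fixed width. As the geometric parameters $R,L \to \infty$, the negative bulk term $\int N(u)\,dx \sim -|N(s_0)|\cdot(\text{volume})$ dominates the positive surface-type gradient energy and the $\ell^2$ term (which is $O(\text{volume}/R^2)$ and hence negligible relative to the volume). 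Matching the required $\|u\|_2^2 = h/m$ by adjusting the geometry then ties $h$ to the volume, and one reads off that $J_h(u,\ell) \le m\,h - c\,h$ for some $c>0$ once $h \ge h_0$, giving $\Lambda < m$.

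The main obstacle is the bookkeeping that ensures the negative $N$-term genuinely beats \emph{all} positive terms simultaneously while the $L^2$-constraint is met: one must choose the amplitude near $s_0$, the annulus geometry, and the way $h\to\infty$ in a mutually consistent way, and verify that the gradient energy (concentrated on the boundary layer, hence scaling like the surface area, i.e. one power of the volume lower) and the centrifugal $\ell^2/r^2$ energy are both lower order than the bulk gain. I would handle this by introducing a single large parameter governing the volume, expressing $\|\bar u\|_2^2$, $\int N$, $\int|\nabla u|^2$, and $\int \tfrac{\ell^2}{r^2}u^2$ as explicit powers of it, and choosing the amplitude and $h$ so that the leading balance yields $m\,h$ exactly with a strictly negative correction. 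A clean alternative, if one prefers to avoid geometric estimates, is to quote the analogous $\mathbf{M}=0$ construction from \cite{bel}: there the same computation shows $\inf \Lambda < m$ for large $h$, and the only new feature here is the extra nonnegative term $\tfrac12\int \tfrac{\ell^2}{r^2}u^2\,dx$, which for the above annular profiles is $o(h)$ and therefore does not affect the conclusion.
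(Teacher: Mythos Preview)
Your approach is correct and shares the essential ingredient with the paper's proof: both build plateau test functions sitting at height $\simeq s_0$ (the point from (W-iii) where $N(s_0)<0$) on annular regions of growing radius, so that the negative bulk contribution of $\int N(u)\,dx$ eventually dominates the gradient energy and the centrifugal $\ell^2/r^2$ term. The organization, however, differs in two respects worth noting.

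First, the paper minimizes $\Lambda(u,\omega)$ over $\omega$ \emph{before} choosing $u$, obtaining $\inf_{\omega<0}\Lambda(u,\omega)=\sqrt{\alpha(u)}$ for a Rayleigh-type quotient $\alpha(u)$, and then shows $\inf_u\alpha(u)\le\alpha_0:=\inf_s 2F(s)/s^2<m^2$ via test functions with radii $R_n\to\infty$ but \emph{fixed} $z$-support. Your choice $\|u\|_2^2=h/m$ instead amounts to fixing $\omega=-m$; this is suboptimal in $\omega$ but perfectly adequate for the strict inequality $\Lambda<m$, and it sidesteps the $\alpha(u)$ machinery. Second, the paper's test functions only produce the bound for \emph{some} $h_0$; to cover all $h\ge h_0$ the paper uses a separate partial dilation $u_\lambda(y,z)=\bar u(y/\lambda,z)$ with $\lambda^2=h/h_0$ and checks that $\Lambda$ does not increase. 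Your single-step construction, tying the annulus geometry directly to the prescribed charge, avoids this second stage.

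One presentation remark: your opening paragraph about amplitude scaling $u=\sigma\bar u$ is in tension with the later requirement that the plateau height stay at $s_0$ (if $\sigma\to\infty$ the function leaves the region where $N<0$). The argument that actually runs is the geometric one---fix the height at $s_0$, vary the annulus size so that $\|u\|_2^2=h/m$---so you might drop the $\sigma$-scaling detour and state that choice of $L^2$ norm directly.
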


The existence of hylomorphic vortices for all charges can be obtained by a stronger version of assumption (W-iii). Let us consider the condition
\begin{itemize}
\item (W-v) \textbf{(Behaviour at $s=0$)} $N(s)\le -s^{2+\eps}$  with  $ 0<\eps<\frac{4}{N}$ for $s\in \R^{+}$ small enough ($N(s)$ is defined in \ref{NN}).
\end{itemize}

\begin{theorem}\label{stand2}
Under assumptions (W-i)-(W-ii)-(W-iv)-(W-v) and for any fixed $\ell\in \Z\setminus \{0\}$ and for all $h \in \R^+$ the nonlinear Klein-Gordon equation \eqref{KG} admits a vortex solution $\psi(t,x)$ of the form \eqref{ansatz} with finite energy \eqref{energy}, charge \eqref{cha} $\mathcal{H} = h$ and, for $N=3$, non-vanishing angular momentum \eqref{amomentum}.
\end{theorem}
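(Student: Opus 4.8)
The plan is to follow the same scheme used for Theorem~\ref{stand}, the only new ingredient being that the stronger behaviour (W-v) of $N$ near the origin lets us produce low-hylomorphy configurations for \emph{every} charge $h>0$, not only for large $h$ as in Theorem~\ref{mezzo-2}. First I would observe that (W-v) forces $N(s_{0})\le -s_{0}^{2+\eps}<0$ for small $s_{0}>0$, so (W-iii) holds and hence all of (W-i)--(W-iv) are in force; in particular Theorem~\ref{mezzo-1} is available. By that theorem, together with Proposition~\ref{estr-vinc} and the reduction to the one-variable functional \eqref{j-def}, it suffices to exhibit, for each fixed $\ell\neq 0$ and each $h>0$, some $u\in\tilde H^{1}_{c}\setminus\{0\}$ with $J_{h}(u,\ell)<mh$ (equivalently a couple $(u,\omega)\in C_{h}$ whose hylomorphy ratio \eqref{hylo} is below $m$). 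Completing the square in the mass terms of \eqref{j-def} gives the convenient identity
\[
J_{h}(u,\ell)-mh=\int\left[\tfrac12|\nabla u|^{2}+\tfrac12\tfrac{\ell^{2}}{r^{2}}u^{2}+N(u)\right]dx+\tfrac12\left(\tfrac{h}{\|u\|_{2}}-m\|u\|_{2}\right)^{2},
\]
so if we impose $\|u\|_{2}^{2}=h/m$ the last square vanishes and it is enough to make the bracketed integral strictly negative.

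For this I would use a scaled test function. Fix a nonnegative, cylindrically symmetric $w\in C^{\infty}_{0}((\R^{2}\setminus\{0\})\times\R^{N-2})$ and set $u_{\sigma}(x)=\mu\,w(x/\sigma)$, choosing the amplitude $\mu=\mu(\sigma)$ by the constraint $\mu^{2}\sigma^{N}\|w\|_{2}^{2}=h/m$, i.e. $\mu=\big(h/(m\|w\|_{2}^{2})\big)^{1/2}\sigma^{-N/2}$. A change of variables shows that $\int|\nabla u_{\sigma}|^{2}$ and $\int r^{-2}u_{\sigma}^{2}$ are both of order $\mu^{2}\sigma^{N-2}$, while $\int u_{\sigma}^{2+\eps}$ is of order $\mu^{2+\eps}\sigma^{N}$. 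Since $\mu\to0$ as $\sigma\to\infty$, for $\sigma$ large the amplitude $\mu\|w\|_{\infty}$ falls in the range where (W-v) applies, so $N(u_{\sigma})\le -u_{\sigma}^{2+\eps}$ pointwise and the bracket is bounded above by $\mu^{2}\sigma^{N-2}\big(\tfrac12 A-\mu^{\eps}\sigma^{2}B\big)$ with $A=\int(|\nabla w|^{2}+\ell^{2}r^{-2}w^{2})>0$ and $B=\int w^{2+\eps}>0$. This is negative as soon as $\mu^{\eps}\sigma^{2}>A/(2B)$, and the constraint turns $\mu^{\eps}\sigma^{2}$ into a constant multiple of $\sigma^{\,2-N\eps/2}$. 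Here the hypothesis $0<\eps<4/N$ is exactly what guarantees $2-N\eps/2>0$, so $\mu^{\eps}\sigma^{2}\to+\infty$ as $\sigma\to\infty$; thus for $\sigma$ large enough both requirements (small amplitude, and $\mu^{\eps}\sigma^{2}$ large) hold simultaneously, the bracket is strictly negative, and we get $J_{h}(u_{\sigma},\ell)<mh$, hence $\inf_{(u,\omega)\in C_{h}}\Lambda(u,\omega)<m$ for every $h>0$.

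The main obstacle is precisely this construction in the small-charge regime, where Theorem~\ref{mezzo-2} no longer applies: pointwise negativity of $N$ at a single value $s_{0}$ (assumption (W-iii)) can be exploited only by amplitude-large, hence large-charge, profiles, whereas here one must spread a small-amplitude profile so widely that its negative potential energy still dominates the gradient and angular ($\ell^{2}/r^{2}$) costs. The quantitative power law in (W-v), balanced against the critical exponent $4/N$ coming from the $L^{2}$-constraint scaling, is what makes this possible, and the scaling computation above is the heart of the argument and its only delicate point. Once $\inf_{C_{h}}\Lambda<m$ is established for all $h>0$, Theorem~\ref{mezzo-1} provides a minimizer of $J_{h}$ on $\tilde H^{1}_{c}\setminus\{0\}$, which through Proposition~\ref{estr-vinc} gives a weak solution $(\bar u,\bar\omega)\in\tilde H^{1}_{c}\times\R^{-}$ of \eqref{nkg-static}; arguing as in Theorem~2.3 of \cite{bebo} this is a distributional solution, and via \eqref{ansatz} it yields a vortex with prescribed charge $\mathcal{H}=h$, finite energy and, for $N=3$, non-vanishing angular momentum, completing the proof for every $h\in\R^{+}$.
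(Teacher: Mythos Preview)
Your argument is correct and follows essentially the same route as the paper: both proofs reduce, via Theorem~\ref{mezzo-1}, to showing that $\int\big[\tfrac12|\nabla u|^{2}+\tfrac12\tfrac{\ell^{2}}{r^{2}}u^{2}+N(u)\big]dx<0$ for some $u$ on a prescribed $L^{2}$-sphere (your completion of the square with $\|u\|_{2}^{2}=h/m$ is equivalent to the paper's choice $\omega=-m$), and then establish this by a scaling construction in which the negative potential term, of order $\sigma^{-N\eps/2}$, dominates the gradient and $\ell^{2}/r^{2}$ terms, of order $\sigma^{-2}$, precisely because $\eps<4/N$. The only cosmetic difference is that you dilate a fixed smooth profile $w(x/\sigma)$ while the paper uses explicit piecewise-linear annular test functions; the underlying scaling balance is identical.
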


We first give the proof of Theorem \ref{mezzo-1}, which is based on some preliminary results.

\begin{lemma}[A priori estimates]\label{apriori}
Assume conditions (W-i), (W-ii) and (W-iv)(a), and let 
$$m(\ell, h):=\inf_{u \in \tilde H^{1}_{c} \setminus \{0\}} J_{h}(u, \ell)\, .$$
Then there exist $K_1, K_2>0$ such that if $(u_n)\subset \tilde H^{1}_{c}$ satisfies
$$J_{h}(u_n, \ell)\rightarrow m(\ell, h)$$
then $K_1 \le \|u_{n}\|_{2} \le \| u_n\|_{H^1}\le \| u_{n} \|_{\tilde H^{1}_{c}}\le K_2$.
\end{lemma}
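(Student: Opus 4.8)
The two middle inequalities $\|u_{n}\|_{2}\le\|u_{n}\|_{H^{1}}\le\|u_{n}\|_{\tilde H^{1}_{c}}$ are immediate from the definitions of the three norms (each next norm simply adds a non-negative term), so the real content is the lower bound $K_{1}\le\|u_{n}\|_{2}$ and the upper bound $\|u_{n}\|_{\tilde H^{1}_{c}}\le K_{2}$. The starting point I would use is to rewrite $J_{h}$ by recombining the mass term with $N$: since $F(s)=\frac{m^{2}}{2}s^{2}+N(s)$, one has $\int N(u)\,dx+\frac12 m^{2}\|u\|_{2}^{2}=\int F(u)\,dx$, so that
\[
J_{h}(u,\ell)=\frac{1}{2}\|\nabla u\|_{2}^{2}+\frac{1}{2}\int \frac{\ell^{2}}{r^{2}}\,u^{2}\,dx+\int F(u)\,dx+\frac{1}{2}\,\frac{h^{2}}{\|u\|_{2}^{2}},
\]
where by (W-i) \emph{every} term on the right is non-negative. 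In particular $m(\ell,h)\ge 0$ is finite (test with any smooth cylindrically symmetric function supported away from $r=0$), and along the minimizing sequence $J_{h}(u_{n},\ell)\le m(\ell,h)+1$ for $n$ large.

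From this representation I would read off three facts at once: $\|\nabla u_{n}\|_{2}$ is bounded, $\int \frac{\ell^{2}}{r^{2}}u_{n}^{2}\,dx$ is bounded, and, crucially, $\tfrac12 h^{2}/\|u_{n}\|_{2}^{2}\le m(\ell,h)+1$. The last inequality, together with $h\neq 0$, immediately yields the lower bound $\|u_{n}\|_{2}\ge K_{1}>0$.

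The remaining and only delicate point is the upper bound on $\|u_{n}\|_{2}$, since the representation above controls $\|u\|_{2}$ only from below. Here I would \emph{not} discard the mass term but instead estimate the possibly negative contribution of $N$ in the original form of $J_{h}$. Integrating (W-iv)(a) from $0$ and using $N(0)=0$ (which follows from (W-ii)) gives $|N(s)|\le \frac{a}{p}s^{p}+\frac{b}{q}s^{q}$, hence $\int N(u_{n})\,dx\ge -\frac{a}{p}\|u_{n}\|_{L^{p}}^{p}-\frac{b}{q}\|u_{n}\|_{L^{q}}^{q}$. I then apply the Gagliardo--Nirenberg inequality $\|u_{n}\|_{L^{s}}\le C\|\nabla u_{n}\|_{2}^{\theta_{s}}\|u_{n}\|_{2}^{1-\theta_{s}}$ for $s=p,q$, where the subcriticality $2<p\le q<2^{*}$ forces $\theta_{s}\in(0,1)$ and, more importantly, $s(1-\theta_{s})<2$. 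Since $\|\nabla u_{n}\|_{2}$ is already bounded, each $\|u_{n}\|_{L^{s}}^{s}$ is bounded by a constant times $\|u_{n}\|_{2}^{\,s(1-\theta_{s})}$ with exponent strictly below $2$; Young's inequality then gives, for any $\eta>0$, $\int N(u_{n})\,dx\ge -\eta\|u_{n}\|_{2}^{2}-C(\eta)$. Choosing $\eta=\frac{m^{2}}{4}$ and inserting this into $J_{h}(u_{n},\ell)$ (now retaining $\frac12 m^{2}\|u_{n}\|_{2}^{2}$) leaves a coercive term $\frac{m^{2}}{4}\|u_{n}\|_{2}^{2}$, so the boundedness of $J_{h}(u_{n},\ell)$ forces $\|u_{n}\|_{2}$ bounded. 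Combining the bounds on $\|\nabla u_{n}\|_{2}$, on $\|u_{n}\|_{2}$, and on $\int \frac{1}{r^{2}}u_{n}^{2}\,dx$ (the latter obtained by dividing the bound on $\int\frac{\ell^{2}}{r^{2}}u_{n}^{2}\,dx$ by $\ell^{2}\ge 1$) gives $\|u_{n}\|_{\tilde H^{1}_{c}}\le K_{2}$.

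The one genuinely load-bearing step is the control of $\int N(u_{n})\,dx$ in the previous paragraph; everything else is reading off non-negative terms from the convenient rewriting of $J_{h}$. This is precisely where the subcritical growth assumption (W-iv)(a) is indispensable: it is what guarantees $s(1-\theta_{s})<2$, hence that the sign-indefinite nonlinear part grows strictly slower than the quadratic mass term as $\|u_{n}\|_{2}\to\infty$ and can therefore be absorbed into it. I expect the verification of $s(1-\theta_{s})<2$ for $2<s<2^{*}$ (which is where the restriction $N\ge 3$ really enters, the exponent degenerating to $2$ when $N=2$) to be the only elementary computation worth spelling out.
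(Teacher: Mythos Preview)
Your argument is correct, but it diverges from the paper's at the only nontrivial step, the upper bound on $\|u_{n}\|_{2}$. The paper also rewrites $J_{h}$ with $F$ in place of $N$ and reads off the same three easy bounds. However, for $\|u_{n}\|_{2}\le \alpha_{4}$ it argues by contradiction without appealing to (W-iv)(a) at all: from (W-ii) one has $F(s)\ge c_{1}s^{2}$ for $0\le s\le\delta$, so the boundedness of $\int F(u_{n})$ forces $\int_{\{u_{n}\le\delta\}}u_{n}^{2}$ to stay bounded; if $\|u_{n}\|_{2}\to\infty$ then $\int_{\{u_{n}>\delta\}}u_{n}^{2}\to\infty$, which contradicts the critical Sobolev bound $\int u_{n}^{2^{*}}\le C\|\nabla u_{n}\|_{2}^{2^{*}}$ via the elementary inequality $\int_{\{u_{n}>\delta\}}u_{n}^{2^{*}}\ge\delta^{2^{*}-2}\int_{\{u_{n}>\delta\}}u_{n}^{2}$.

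So the paper's proof uses only (W-i), (W-ii) and the endpoint Sobolev embedding, whereas you use (W-iv)(a) together with Gagliardo--Nirenberg and absorption via Young. Your route has the advantage of being direct (no contradiction) and of making the role of subcriticality completely explicit through the computation $s(1-\theta_{s})<2$. The paper's route, on the other hand, shows that the growth hypothesis (W-iv)(a) is actually superfluous for this particular lemma: positivity of $F$ and the nondegeneracy $F''(0)=m^{2}>0$ already suffice. Both arguments require $N\ge 3$ at the same place, namely the use of $2^{*}<\infty$.
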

\begin{proof}
First of all we can rewrite $J_{h}(u,\ell)$ defined in \eqref{j-def} as
\begin{equation}\label{j-new}
J_{h}(u,\ell) = \int\left[ \frac {1}{2}\left| \nabla u\right| ^{2}+\frac{1}{2}\frac{\ell^2}{r^2}u^{2}+F(u) \right] dx +\frac 12 \frac{h^2}{\|u\|_2^2}
\end{equation}
which is thanks to (W-i) a sum of non-negative terms. Hence any minimizing sequence $(u_{n})$ satisfies
$$
\| \nabla u_{n} \|_{2} \le \alpha_{1},\quad \int \frac{u_{n}^2}{r^2}\, dx \le \alpha_{2}, \quad \| u_{n} \|_{2} \ge \alpha_{3}
$$
for some positive constants $\alpha_{1}, \alpha_{2}, \alpha_{3}$.

In order to prove that $\|u_n\|_{\tilde H^1_{c}}\leq K_2$ it remains to show that 
$\|u_n\|_2\le \alpha_{4}$ for some positive $\alpha_{4}$. Here we follow an argument from \cite{bbbm} which we include for completeness. From (W-ii) it follows that
\begin{equation}  \label{vicino-zero}
\exists\, \delta>0 \ \exists\, c_1>0, \text{ such that } F(s) \geq c_1
s^2 \text { for } 0\le s \le \delta.
\end{equation}
Let us assume by contradiction that
$$
\| u_n \|_{2} \rightarrow \infty.
$$
By \eqref{j-new} $\int F(u_n)\,dx$ is bounded and by  (W-i) and (\ref{vicino-zero})
\begin{equation}  \label{for}
\int F(u_n)\, dx \ge \int_{0 \le u_n\le \delta} F(u_n)\, dx \ge c_1\,
\int_{0 \le u_n \le \delta} u_n^2\, dx .
\end{equation}
On the other hand
$$
\|u_{n}\|_{2}^{2} = \int_{0\le u_n\le \delta} u_n^2\, dx +\int_{u_n\ge \delta} u_n^2\, dx
\rightarrow \infty,
$$
thus we have by (\ref{for})
$$
\int_{u_n\ge \delta} u_n^2\, dx \rightarrow \infty.
$$
This drives to a contradiction since for $2^{*}= \frac{2N}{N-2}$
$$
\frac{1}{\delta^{2^*-2}} \ \int_{u_n\ge \delta} u_n^{2^*}dx \ge
\int_{u_n\ge \delta} u_n^2\, dx
$$
and by the Sobolev embedding theorem
$$
\int_{u_n\ge \delta} u_n^{2^*}dx \le \int u_n^{2^*}dx \le c_{2}\, \| \nabla u_{n} \|_{2}^{\frac{2^{*}}{2}} \le c_{2}\, \alpha_{1}
$$
where $c_{2}$ is the Sobolev embedding constant.
\end{proof}

\begin{lemma}\label{nonvanish}
Let $u_n$ be a bounded sequence in $\tilde H^{1}_{c}$ such that 
$$\|u_n\|_{L^{q}}\ge \delta >0 \text{ for some } q \in \left(2, \frac{2N}{N-2} \right).$$
Then, up to subsequence, there exist a sequence $(\tau_n) \subset \R^{N-2}$ and $\bar u \in \tilde H^1_c$, $\bar u\not\equiv 0$, such that $u_n(\cdot+\tau_n)$ converges weakly to $\bar u$.
\end{lemma}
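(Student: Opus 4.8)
The plan is to run a vanishing/concentration–compactness argument of Lions type, adapted to the cylindrical setting, in which the \emph{only} possible loss of compactness comes from translations in the $z$–variable (this is precisely why the statement permits translations $\tau_n\in\R^{N-2}$ rather than in the whole $\R^N$). Cylindrical symmetry together with the Esteban–Lions compactness of Lemma \ref{compactness} will handle the $y$–plane on bounded $z$–slabs. Concretely, I would first show that \emph{vanishing} in $L^q$ is impossible under the lower bound $\|u_n\|_{L^q}\ge\delta$, and then \emph{extract} a non-trivial weak limit after a suitable $z$–translation.

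\textbf{Vanishing step.} Tile $\R^{N-2}$ by the unit cubes $Q_k=k+Q_1$, $k\in\Z^{N-2}$, with $Q_1=[0,1)^{N-2}$, and set $S_k=\R^2\times Q_k$, so that the slabs $\{S_k\}$ partition $\R^N$. On each slab I would combine the Sobolev embedding $\|v\|_{L^{2^*}(S_k)}\le C\|v\|_{H^1(S_k)}$ with the interpolation $\|v\|_{L^p(S_k)}\le\|v\|_{L^q(S_k)}^{1-\sigma}\|v\|_{L^{2^*}(S_k)}^{\sigma}$, the constant $C$ being independent of $k$ by invariance under $z$–translations. Choosing the pivot exponent $p=2+\tfrac{2q}{N}$, which satisfies $q<p<2^*$ and makes $p\sigma=2$, then summing over $k$ and using $\sum_k\|u_n\|_{H^1(S_k)}^2=\|u_n\|_{H^1(\R^N)}^2\le C$, yields
\begin{equation*}
\|u_n\|_{L^p(\R^N)}^p\le C\Big(\sup_{\tau\in\R^{N-2}}\int_{\R^2\times(\tau+Q_1)}|u_n|^q\,dx\Big)^{2/N}\|u_n\|_{H^1(\R^N)}^2 .
\end{equation*}
Hence, if the supremum on the right tended to $0$, then $\|u_n\|_{L^p}\to0$, and interpolating $L^q$ between the bounded $L^2$–norm and $L^p$ (recall $2<q<p$) would force $\|u_n\|_{L^q}\to0$, contradicting the hypothesis. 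Therefore, along a subsequence, there exist $\gamma>0$ and $\tau_n\in\R^{N-2}$ with $\int_{\R^2\times(\tau_n+Q_1)}|u_n|^q\,dx\ge\gamma$.

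\textbf{Extraction step.} Set $v_n(y,z):=u_n(y,z+\tau_n)$. Since the weight $1+r^{-2}$ depends only on $y$, the $z$–translation is an isometry of $\tilde H^1_c$, so $(v_n)$ is bounded, cylindrically symmetric and non-negative; up to a subsequence $v_n\rightharpoonup\bar u$ in $\tilde H^1_c$, with $\bar u\ge0$. To see $\bar u\not\equiv0$, fix a bounded open $\mathcal O\supset\overline{Q_1}$ and a cutoff $\chi=\chi(z)\in C_0^\infty(\mathcal O)$ with $\chi\equiv1$ on $Q_1$. Then $\chi v_n$ is bounded in $\tilde H^1_c(\R^2\times\mathcal O)$ and converges weakly to $\chi\bar u$, so by Lemma \ref{compactness} (compact embedding into $L^q$) the convergence is strong in $L^q(\R^2\times\mathcal O)$. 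Passing to the limit in $\int_{\R^2\times Q_1}|v_n|^q=\int_{\R^2\times Q_1}|\chi v_n|^q\ge\gamma$ gives $\int_{\R^2\times Q_1}|\bar u|^q\ge\gamma>0$, whence $\bar u\not\equiv0$, as required.

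\textbf{Main obstacle.} The analytical heart is the vanishing step: producing the Sobolev/interpolation estimate on the slabs $S_k$ with constants uniform in $k$, and selecting the pivot exponent so that the leftover $H^1$–masses sum exactly to the (bounded) global norm. The extraction is comparatively routine, the only delicate point being the use of the cutoff $\chi$ to legitimately invoke the bounded–$\mathcal O$ compactness of Lemma \ref{compactness}; note that cylindrical symmetry is genuinely needed only here, to control the otherwise non-compact $y$–plane.
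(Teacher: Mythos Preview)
Your argument is correct. The paper does not actually prove this lemma; it simply refers to Lemma~3.6 of \cite{bebo}, so there is no in-paper proof to compare against. What you have written is a self-contained version of the underlying argument: a Lions-type vanishing lemma run on $z$--slabs $\R^2\times Q_k$ (with the pivot exponent $p=2+\tfrac{2q}{N}$ chosen so that $p\sigma=2$ and the $H^1$--masses sum correctly), followed by $z$--translation and use of Lemma~\ref{compactness} on a bounded $\mathcal O$ via a cutoff in $z$. This is precisely the mechanism behind the cited result; your write-up makes explicit the two points the paper leaves implicit, namely that $z$--translation is an isometry of $\tilde H^1_c$ (since the weight $1+r^{-2}$ depends only on $y$) and that the Esteban--Lions compactness of Lemma~\ref{compactness} is what rules out escape in the $y$--plane. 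One cosmetic remark: weak convergence should be taken in the ambient Hilbert space $\tilde H^1_c(\R^2\times\R^{N-2})$, the non-negativity of $\bar u$ then following from that of the $v_n$; you note this, but it is worth stating cleanly since the paper's $\tilde H^1_c$ denotes the cone.
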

\begin{proof}
See Lemma 3.6 in \cite{bebo}.
\end{proof}

Finally we need the following compactness result shown in \cite{bebo} for a constrained minimization problem of functionals on the $L^{2}$ balls
\begin{lemma}[Minimization problem on $L^2$ constraint \cite{bebo}]\label{jc}
Let us consider the following minimization problem
$$G_{\rho}:=\inf_{v \in B_{\rho}}G(v)$$
where
$$G(v):=\int\left[ \frac {1}{2}\left| \nabla v\right| ^{2}+\frac{1}{2}\frac{\ell^2}{r^2}v^{2}+N(v) \right] dx,$$
and $B_{\rho}=\{v \in \tilde H^1_c \, : \, \|v\|^2_2=\rho\}$. Under assumptions (W-ii), (W-iii) and (W-iv), if there exist $v_0\in B_{\rho}$ such that $G (v_0)<0$ then 
for any sequence $v_n$ such that 
$$
G(v_n) \rightarrow G_{\rho}
$$
there exist, up to subsequence, a sequence $(\tau_n) \subset \R^{N-2}$ and a $\bar v \in \tilde H^1_c$, $\bar v \not\equiv 0$, such that $v_n(\cdot+\tau_n)$ converges to $\bar v$ in the $\tilde H^{1}_{c}$ norm.
\end{lemma}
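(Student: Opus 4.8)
The natural approach is a concentration--compactness argument performed only in the unbounded variable $z\in\R^{N-2}$. Indeed, by the cylindrical symmetry together with Lemma \ref{compactness}, the embedding $\tilde H^{1}_{c}(\R^2\times\mathcal O)\hookrightarrow L^p$ is compact on every bounded slab $\mathcal O\subset\R^{N-2}$, so the only mechanism by which a minimizing sequence can fail to converge is translation to infinity in $z$; the sequence $(\tau_n)\subset\R^{N-2}$ in the statement is exactly what recenters it. I would first check that a minimizing sequence $(v_n)$ for $G_\rho$ is bounded in $\tilde H^{1}_{c}$. Since $\|v_n\|_2^2=\rho$ is fixed and the quadratic terms $\frac12|\nabla v_n|^2$ and $\frac12\frac{\ell^2}{r^2}v_n^2$ are non-negative, it is enough to bound $\int N(v_n)$ from below, which follows from the growth control $|N(s)|\le C(s^{p}+s^{q})$ in (W-iv)(a) and a Gagliardo--Nirenberg estimate on the constraint $\|v_n\|_2^2=\rho$, exactly as in the a priori bound of Lemma \ref{apriori}. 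Hence $\|\nabla v_n\|_2$ and $\int v_n^2/r^2$ stay bounded and $\|v_n\|_{\tilde H^{1}_{c}}\le K$.

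Next I would exclude vanishing. The hypothesis provides $v_0$ with $G(v_0)<0$, so $G_\rho<0$; since the quadratic part is non-negative, for large $n$ we get $\int N(v_n)\le\frac12 G_\rho<0$, and the upper bound on $N$ forces $\int(|v_n|^{p}+|v_n|^{q})\ge c>0$. By interpolation $\|v_n\|_{L^{q_0}}\ge\delta>0$ for some $q_0\in(2,2^{*})$, so Lemma \ref{nonvanish} yields, up to a subsequence, translations $(\tau_n)\subset\R^{N-2}$ and $\bar v\not\equiv 0$ with $w_n:=v_n(\cdot+\tau_n)\rightharpoonup\bar v$ in $\tilde H^{1}_{c}$. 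The crux is then to show that no mass escapes, i.e. $\|\bar v\|_2^2=\rho$, equivalently to rule out dichotomy.

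The tool for this is the strict subadditivity $G_\rho<G_\alpha+G_{\rho-\alpha}$ for $0<\alpha<\rho$. I would derive it from a scaling inequality obtained by dilating only in $z$: for $v\in B_\rho$, $v\not\equiv0$, set $v^\lambda(y,z):=v(y,z/\lambda)$ with $\lambda^{N-2}=\theta>1$, so that $v^\lambda\in B_{\theta\rho}$ and a direct computation gives $G(v^\lambda)-\theta G(v)=\frac12\lambda^{N-4}(1-\lambda^2)\int|\nabla_z v|^2<0$, the last inequality because $\int|\nabla_z v|^2>0$ for any nonzero $v\in\tilde H^{1}_{c}$. Passing to the infima (and using $G_\rho<0$ to rule out a minimizing sequence flattening in $z$) this upgrades to $G_{\theta\rho}<\theta G_\rho$, whence $G_\alpha>\frac{\alpha}{\rho}G_\rho$ and $G_{\rho-\alpha}>\frac{\rho-\alpha}{\rho}G_\rho$, and adding gives strict subadditivity. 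I would then close the argument with a Brezis--Lieb splitting: writing $\rho_1:=\|\bar v\|_2^2\in(0,\rho]$ and $\rho_2:=\rho-\rho_1$, the decomposition $w_n=\bar v+(w_n-\bar v)$ yields $\|w_n-\bar v\|_2^2\to\rho_2$ and, since the quadratic terms split under weak convergence and $\int N$ splits by Brezis--Lieb (using the growth of $N$ and a.e. convergence from Lemma \ref{compactness} on slabs), $G_\rho=\lim G(w_n)=G(\bar v)+\lim G(w_n-\bar v)\ge G_{\rho_1}+G_{\rho_2}$. If $\rho_2>0$ this contradicts strict subadditivity, so $\rho_2=0$, $w_n\to\bar v$ strongly in $L^2$, and $\bar v\in B_\rho$ is a minimizer; finally $G(w_n)\to G(\bar v)$ together with convergence of the nonlinear term forces convergence of $\|\nabla w_n\|_2$ and $\int w_n^2/r^2$, upgrading the weak convergence to strong convergence in $\tilde H^{1}_{c}$.

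I expect the main obstacle to be precisely the strict subadditivity and its use against dichotomy. The $z$-dilation makes the per-function scaling inequality transparent, but promoting it to a strict inequality between the infima $G_{\theta\rho}$ and $\theta G_\rho$, and then carrying out the Brezis--Lieb bookkeeping, is delicate: one must verify additivity of the gradient and of the $\frac{\ell^2}{r^2}$-weighted term under weak convergence, the Brezis--Lieb splitting of $\int N$ under (W-iv), and the lower semicontinuity of $\rho\mapsto G_\rho$ needed for $\liminf G(w_n-\bar v)\ge G_{\rho_2}$. The weight $1/r^2$, being independent of $z$, is compatible with the $z$-translations but must be tracked carefully through each of these estimates.
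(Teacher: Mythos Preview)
The paper does not prove this lemma: it is quoted verbatim from \cite{bebo} (Bellazzini--Bonanno) and used as a black box in the proof of Theorem~\ref{mezzo-1}. So there is no ``paper's own proof'' to compare against here.

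Your outline is the standard concentration--compactness argument one expects to find in \cite{bebo}: boundedness of minimizing sequences from the constraint and the growth condition, exclusion of vanishing from $G_\rho<0$ via Lemma~\ref{nonvanish}, exclusion of dichotomy via strict subadditivity, and the final upgrade from weak to strong convergence by matching norms. The $z$-dilation $v^\lambda(y,z)=v(y,z/\lambda)$ is the right tool in this setting, since the weight $\ell^2/r^2$ and the nonlinear term scale with the same factor $\lambda^{N-2}$ while $\|\nabla_z v\|_2^2$ scales with $\lambda^{N-4}$; your computation $G(v^\lambda)=\theta G(v)+\tfrac12\lambda^{N-4}(1-\lambda^2)\|\nabla_z v\|_2^2$ is correct.

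You have also correctly located the one genuinely delicate step: passing from the per-function inequality $G(v^\lambda)<\theta G(v)$ to the strict inequality $G_{\theta\rho}<\theta G_\rho$ at the level of infima. Your parenthetical ``using $G_\rho<0$ to rule out a minimizing sequence flattening in $z$'' is where the work hides, and it is not obvious that $G_\rho<0$ alone prevents $\|\nabla_z v_n\|_2\to 0$ along a minimizing sequence. The cleaner route, which is presumably what \cite{bebo} does, is to avoid the abstract strict-subadditivity inequality and argue directly on the dichotomy configuration: once you have the weak limit $\bar v\not\equiv 0$ with mass $\rho_1<\rho$, the function $\bar v$ itself satisfies $\|\nabla_z\bar v\|_2>0$, so the $z$-dilation applied to $\bar v$ (together with the non-strict scaling $G_{\theta\rho}\le\theta G_\rho$ applied to the remainder mass $\rho_2$) produces a competitor with energy strictly below $G_\rho$, yielding the contradiction. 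Your Brezis--Lieb bookkeeping and the remark that the weight $1/r^2$ is $z$-translation invariant are both on point.
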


\begin{proof}[Proof of Theorem \ref{mezzo-1}]
First of all, fixed $\ell \in \Z \setminus \{0\}$, we can rewrite the hylomorphy ratio \eqref{hylo} as a function of the single variable $u$, that is
\begin{equation}\label{hylo-j}
\Lambda(u,\ell) = \frac{J_{h}(u,\ell)}{h} = \frac 1h \int\left[ \frac {1}{2}\left| \nabla u\right| ^{2}+\frac{1}{2}\frac{\ell^2}{r^2}u^{2}+N(u) \right] dx + \frac 12\left( \frac{h}{\|u\|_2^2}+\frac{m^2}{h} \| u\|_2^2\right)
\end{equation}
and remark that for all $u \in \tilde H^1_c$ and all $h>0$
\begin{equation}\label{noticina}
\frac 12\left( \frac{h}{\|u\|_2^2}+\frac{m^2}{h} \| u\|_2^2\right) \ge m.
\end{equation}
Therefore, if there exists $(u,\omega) \in C_{h}$ such that $\Lambda(u,\omega) <m$, by \eqref{hylo-j} there exists $u\in \tilde H^{1}_{c}$ such that $J_{h}(u,\ell) < mh$. Hence, thanks to \eqref{noticina}, for any minimizing sequence $(u_n)\subset \tilde H^{1}_{c}$ such that $J_{h}(u_n,\ell) \rightarrow m(\ell,h)$ there exist $\bar n$ and $\epsilon_0<0$ such that for all $n\ge \bar n$
$$G(u_n):=\int\left[ \frac {1}{2}\left| \nabla u_n\right| ^{2}+\frac{1}{2}\frac{\ell^2}{r^2}u_n^{2}+N(u_n) \right] dx\le \epsilon_0<0. $$
This fact and the growth condition (W-iv) guarantee that there exist $\delta>0$ and $q \in \left(2, \frac{2N}{N-2}\right)$ such that $\|u_n\|_{L^{q}}>\delta$ for $n\ge \bar n$. Hence we can apply Lemmas \ref{apriori} and \ref{nonvanish} to conclude that up to subsequence there exists a sequence $(\tau_n) \subset \R^{N-2}$ such that
\begin{equation}  \label{converg}
u_n(\cdot + \tau_n) \rightharpoonup u_0\neq 0  \text{ weakly in } \tilde H^1_c
\end{equation}
with
\begin{equation}  \label{rho}
\int u_n^2\, dx \to \rho>0
\end{equation}
by Lemma \ref{apriori}. We claim that
\begin{equation}  \label{inf-uguali}
m(\ell,h)= G_{\rho} + \frac  {m^2}{2} \, \rho +
\frac{h^2}{2\rho}
\end{equation}
where $\rho$ is defined in (\ref{rho}) and $G_{\rho}:= \inf_{B_{\rho}} G$ (see Lemma \ref{jc}). In order to prove \eqref{inf-uguali} we take $v_n=\frac{\sqrt{\rho}}{\|u_n\|_{2}}u_n \in B_\rho$ and show
that
\begin{equation}  \label{convpalla}
G(v_n)-G(u_n)\rightarrow 0.
\end{equation}
Indeed,
{\small
\begin{equation*}
\begin{array}{l}
|G(v_n)-G(u_n)| = |(\frac{\rho}{\|u_n\|^2_{2}}-1) \frac 12 \int [ |\nabla u_n|^2+\frac{\ell^2}{r^2}u^{2} ] dx +
\int N(\frac{\sqrt{\rho}}{\|u_n\|_{2}}u_n)-N(u_n)dx| \le \\[0.3cm]
\le |(\frac{\rho}{\|u_n\|^2_{2}}-1)| \frac 12 \int [ |\nabla u_n|^2 +\frac{\ell^2}{r^2}u^{2}] dx+ |(\frac{\sqrt{
\rho}}{\|u_n\|_{2}}-1)| \int |N^{\prime}(\theta\frac{\sqrt{\rho}}{
\|u_n\|_{2}}u_n+(1-\theta)u_n)| u_n\, dx \le \\[0.3cm]
\le |(\frac{\rho}{\|u_n\|^2_{2}}-1)| \frac{\ell^{2}}{2} \| u_{n} \|_{\tilde H^{1}_{c}}^{2} + |(\frac{\sqrt{
\rho}}{\|u_n\|_{2}}-1)| \int |N^{\prime}(\theta\frac{\sqrt{\rho}}{
\|u_n\|_{2}}u_n+(1-\theta)u_n)| u_n\, dx
\end{array}
\end{equation*}}
\noindent for some function $\theta: \R^{N}\to (0,1)$. Moreover, $\| u_{n} \|_{\tilde H^{1}_{c}}^{2}$ is bounded by Lemma \ref{apriori}, and applying (W-iv)(a) we get
{\small
\begin{equation*}
\begin{array}{l}
\int |N^{\prime}(\theta\frac{\sqrt{\rho}}{\|u_n\|_{2}}u_n+ (1-\theta) u_n)|
u_n\, dx \le \\[0.3cm]
\le a\int|\left(\theta\frac{\sqrt{\rho}}{\|u_n\|_{2}}+(1-\theta)
\right)^{p-1}|\, |u_n|^p dx + b \int |\left(\theta\frac{\sqrt{\rho}}{
\|u_n\|_{2}}+(1-\theta) \right)^{q-1}|\, |u_n|^{q} dx 
\end{array}
\end{equation*}} 
\noindent and the right-hand side is bounded thanks to the Sobolev embedding theorem. Hence (\ref{convpalla}) follows since $\frac{\rho}{\|u_n\|^2_{2}} \rightarrow 1$. Hence it follows that
$$
G(v_n) \rightarrow  m(\ell,h)- \frac{m^{2}}{2}\, \rho-\frac{h^2}{2\rho}.
$$
and
$$
\inf_{B_{\rho}} G  \le m(\ell, h) - \frac{m^{2}}{2}\, \rho-\frac{ h^2}{2\rho}.
$$
The opposite inequality holds since 
$$
\inf_{u\in B_{\rho}} G(u) = \inf_{u\in B_{\rho}} \left( J_{h}(u,\ell) - \frac 12 \frac{h^{2}}{\|u\|_2^2}+\frac{m^2}{2} \| u\|_2^2 \right) = \inf_{u\in B_{\rho}}  J_{h}(u,\ell) - \frac{m^{2}}{2}\, \rho-\frac{h^2}{2\rho} \ge
$$
$$
\ge m(\ell,h) - \frac{m^{2}}{2}\, \rho-\frac{h^2}{2\rho}
$$
where in the last inequality we used the fact that $m(\ell,h)$ is computed on a set containing $B_{\rho}$.

Hence we have proved that any minimizing sequence $(u_{n})\subset \tilde H^{1}_{c}$ for $J_{h}$, with $m(\ell,h)< mh$, satisfies \eqref{converg} and \eqref{rho}, and we find a sequence $v_{n} \in B_{\rho}$ which is minimizing for a functional $G$ satisfying the assumptions of Lemma \ref{jc}. Indeed from \eqref{inf-uguali} it follows that 
$$G_{\rho} + \frac  {m^2}{2} \, \rho + \frac{h^2}{2\rho} < mh$$
whence that $G_{\rho}<0$. And from \eqref{convpalla} it follows that $G(v_{n}) \to G_{\rho}$. Hence, applying Lemma \ref{jc}, it follows that, up to subsequence, there exist a sequence $(\tau_n) \subset \R^{N-2}$ and a $\bar v \in \tilde H^1_c$, $\bar v \not\equiv 0$, such that $v_n(\cdot+\tau_n)$ converges to $\bar v$ in the $\tilde H^{1}_{c}$ norm. Hence
$$
u_{n}(\cdot+\tau_n) = \frac{\| u_{n}\|_{2}}{\sqrt{\rho}} \, v_{n}(\cdot+\tau_n) \to \bar v \quad \text{in the $\tilde H^{1}_{c}$ norm}
$$
since $\|u_{n}\|_{2}\to \sqrt{\rho}$. Theorem \ref{mezzo-1} is proved.
\end{proof}

\begin{proof}[Proof of Theorem \ref{mezzo-2}]
We start the proof by defining 
\begin{equation}\label{alfa0}
\alpha _{0}:=\inf\limits_{s\in \R^{+} }\ \frac{F(s)}{\frac{1}{2}s^{2}}.
\end{equation}
Thanks to (W-iii) we have
$$\alpha_0<m^2.$$
The hylomorphy ratio \eqref{hylo} can be written also as
$$
\Lambda(u, \omega)=\frac{E( u, \omega)}{H( u,  \omega)}= - \frac 1 2 \, \omega
- \frac{1}{2\omega} \, \alpha(u)
$$
where
$$
\alpha(u):= \frac{\int \left( \frac{1}{2} |\nabla u|^2 +\frac 12\frac{\ell^2}{r^2}|u|^2+ F(u) \right)dx}{
\int \frac 1 2 \, u^2 \ dx}.
$$
Hence, for any fixed $u$, we have
$$
\inf\limits_{\omega \in \R^-} \, \Lambda( u,\omega) = \Lambda\left(u, -\sqrt{\alpha(u)}\right) = \sqrt{\alpha(u)}.
$$
Hence it is sufficient to show that
$$
\inf\limits_{u\in \tilde H^1_c} \, \alpha(u) \le \alpha_0<m^2.
$$
By definition of $\alpha_{0}$ \eqref{alfa0}, for any fixed $\epsilon>0$ there exists $s_0\in \R^+$ such that
$$
\frac{F(s_0)}{\frac{1}{2}s_0^{2}}<\alpha_0+\frac{\epsilon}{2}
$$
Then let us consider the sequence of functions $u_n(x) = u_n(|y|,z) = f(|z|)\, v_n(|y|)$
with
$$
v_n(|y|) = \left\{
\begin{array}{ll}
0 & \text{for $|y| \le R_n-1$} \\[0.1cm]
s_0 (|y|-R_n+1) & \text{for $R_n-1 \le |y| \le R_n$} \\[0.1cm]
s_0 & \text{for $R_n \le |y| \le 2 R_n $} \\[0.1cm]
s_0 (2 R_n-|y|+1) & \text{for $2 R_n \le |y| \le 2 R_n+1$} \\[0.1cm]
0 & \text{for $|y| \ge 2 R_n+1$}
\end{array} \right.
$$
$$
f(|z|) = \left\{
\begin{array}{ll}
1 & \text{for $0\le |z| \le 1$} \\[0.1cm]
2-|z| & \text{for $1 \le |z| \le 2$} \\[0.1cm]
0 & \text{for $|z| \ge 2$}
\end{array}
\right.
$$
and assume $R_n \to \infty$. 
Then $u_n \in \tilde H^1_c$ and
$$
\int\, |\nabla u_n|^2\, dx = O(R_n)
$$
$$
\int\, \frac{\ell^2}{r^2}u_n^2\, dx = \left( \int_{R_n}^{2 R_n}\,  \frac{\ell^2}{r^2} \,
r \, s_0^2\, dr \right) \, \left( \int_{|z|\le 2}\, f(z) dz
\right) + o(\ln(R_n)) = O(\ln(R_n))
$$
$$
\int\, F(u_n)\, dx = \left( \int_{R_n}^{2 R_n}\, r\,
F(s_0)\, dr \right) \, \left( \int_{|z|\le 2}\, f(z) dz \right) +
o(R_n) \leq 
$$
$$\leq \left( \int_{R_n}^{2 R_n}\, r\,
\frac{1}{2}s_0^2(\alpha_0+\frac{\epsilon}{2})\, dr \right) \, \left( \int_{|z|\le 2}\, f(z) dz \right) +
o(R_n)=const (\alpha_0+\frac{\epsilon}{2}) s_{0}^{2} R_{n}^{2}  +o(R_n)
$$
$$\int \frac 12 |u_n|^2 dx=  \left( \int_{R_n}^{2 R_n}\, r\,
\frac 12 s_0^2\, dr \right) \, \left( \int_{|z|\le 2}\, f(z) dz \right) +
o(R_n) = const\, s_{0}^{2} R_{n}^{2}  +o(R_n)$$
It follows that $\alpha(u_n)<\alpha_0+\epsilon$ for sufficiently large $n$. Moreover
$$H\left(u_{n}, -\sqrt{\alpha(u_{n})}\right) \approx R_{n}^{2}.$$
We have thus proved that there exists $h_{0}\in \R^{+}$ such that
$$
\inf_{(u,\omega)\in C_{h_{0}}}\, \Lambda(u,\omega) < m.
$$
Let now $h>h_{0}$ and $(\bar u,\bar \omega) \in C_{h_{0}}$ such that $\Lambda(\bar u,\bar \omega) < m$. We want to show that there exists $(u,\omega) \in C_{h}$ such that $\Lambda(u,\omega) < m$. Recalling that $x=(y,z) \in \R^{2}\times \R^{N-2}$, let us define for $\lambda \in \R$
$$
u_{\lambda}(y,z) = \bar u \left( \frac{y}{\lambda}, z \right),\quad \omega = \bar \omega
$$
Then $(u_{\lambda}, \omega) \in C_{h}$ if 
$$
-\omega\,  \| u_{\lambda} \|_{2}^{2} = -\bar \omega \, \lambda^{2} \| \bar u \|_{2}^{2} = -\bar \omega\, \lambda^{2} \frac{h_{0}}{-\bar \omega} = h
$$
that is, if $\lambda^{2} = \frac{h}{h_{0}}>1$. Moreover, for the single terms appearing in $\Lambda$ it holds
$$
\int |\nabla u_{\lambda}|^{2} dx = \int \left( |\nabla_{y} u_{\lambda}|^{2} + |\nabla_{z} u_{\lambda}|^{2} \right) dx = \int \left( |\nabla_{y} \bar u|^{2} + \lambda^{-2} |\nabla_{z} \bar u|^{2} \right) dx \le \int |\nabla \bar u|^{2} dx\, ,
$$
$$
\int \frac{\ell^{2}}{r^{2}}\, u_{\lambda}^{2} \, dx = \int \frac{\ell^{2}}{r^{2}}\, \bar u^{2} \, dx
$$
since $r^{2} = |y|^{2}$, and
$$
\int N(u_{\lambda})\, dx = \lambda^{2} \int N(\bar u) \, dx\, .
$$
Hence using also $h>h_{0}$ it follows
$$
\Lambda(u_{\lambda}, \omega) = \frac 1h \int\left[ \frac {1}{2}\left| \nabla u_{\lambda}\right| ^{2}+\frac{1}{2}\frac{\ell^2}{r^2}u_{\lambda}^{2}+N(u_{\lambda}) \right] dx + \frac 12\left( \frac{h}{\|u_{\lambda}\|_2^2}+\frac{m^2}{h} \| u_{\lambda}\|_2^2\right) \le
$$
$$
\le \frac{1}{h_{0}} \int \frac {1}{2} \left[ \left| \nabla \bar u \right| ^{2} + \frac{\ell^{2}}{r^{2}}\, \bar u^{2} \right] \, dx + \frac{\lambda^{2}}{h} \int N(\bar u) \, dx + \frac 12\left( -\omega - \frac{m^2}{\omega}\right) =
$$
$$
= \frac{1}{h_{0}} \int\left[ \frac {1}{2}\left| \nabla \bar u \right| ^{2}+\frac{1}{2}\frac{\ell^2}{r^2}\bar u^{2}+N(\bar u) \right] dx + \frac 12\left( \frac{h_{0}}{\|\bar u \|_2^2}+\frac{m^2}{h_{0}} \| \bar u\|_2^2\right) = \Lambda(\bar u , \bar \omega) < m
$$
and the proof is finished.
\end{proof}
\begin{proof}[Proof of Theorem \ref{stand2}]
Arguing as in the proof of Theorem \ref{stand} we only need to show that
for any fixed $\ell\in \Z\setminus \{0\}$ and for all $h\in\R^+$
\begin{equation}\label{mainhylo3}
\inf_{(u,\omega)\in C_{h}} \Lambda(u,\omega) < m
\end{equation}
Choosing $\omega=-m$ we have
\begin{equation*}
\inf_{(u,\omega)\in C_{h}} \Lambda(u,\omega) \leq \inf_{(u,-m)\in C_{h}}\Lambda(u,-m).
\end{equation*}
Notice that $\Lambda(u,-m)<m$ if
$$\int \left( \frac{1}{2} |\nabla u|^2 +\frac 12\frac{\ell^2}{r^2}|u|^2+ N(u) \right)dx<0$$
Hence it is sufficient to show that for any $\rho>0$
$$\inf_{u\in \tilde H^1_{c,\rho}} \int \left( \frac{1}{2} |\nabla u|^2 +\frac 12\frac{\ell^2}{r^2}|u|^2+ N(u) \right)dx<0 $$
where
$$\tilde H^1_{c,\rho}:=\{u \in \tilde H^1_c \text{ s.t }||u||_2^2=\rho \}.$$
Now let us consider the sequence of functions $u_n(x) = u_n(|y|,z) = f(|z|)\, v_n(|y|)$
with
$$
v_n(|y|) = \left\{
\begin{array}{ll}
0 & \text{for $|y| \le R$} \\[0.1cm]
-s_n+\frac{s_n}{R_n}|y| & \text{for $R_n \le |y| \le 2R_n$} \\[0.1cm]
s_n  & \text{for $2R_n \le |y| \le 4 R_n $} \\[0.1cm]
5s_n-\frac{s_n}{R_n}|y| & \text{for $4 R_n \le |y| \le 5 R_n$} \\[0.1cm]
0 & \text{for $|y| \ge 5 R_n$}
\end{array} \right.
$$
$$
f(|z|) = \left\{
\begin{array}{ll}
1 & \text{for $0\le |z| \le R_n$} \\[0.1cm]
1-\frac{1}{R_n}(|z|-R_n) & \text{for $R_n \le |z| \le 2|R_n|$} \\[0.1cm]
0 & \text{for $|z| \ge 2R_n$}
\end{array}
\right.
$$
and assume $R_n \to \infty$ and $s_n \rightarrow 0$ such that $\int |u_n|^2dx=\rho$. Notice that
$$\int |u_n|^2dx=\left(\int_{\R^2}v_n(|y|)^2 dy\right)\left(\int_{\R^{N-2}}f(|z|)^2dz\right)$$
such that a simple scaling analysis shows that $\lim_{n \rightarrow \infty}s_n^2R_n^N=\gamma>0$. Moreover we get
$$
\int_{\R^N} |\nabla u_n|^2dx=\left(\int_{\R^2}|\nabla v_n|^2dy \right)\left(\int_{\R^{N-2}}f(|z|)^2dz\right)+\left(\int_{\R^2} v_n(|y|)^2dy \right)\left(\int_{\R^{N-2}}|\nabla f|^2dz\right)=O(R_n^{-2})
$$
$$
\int_{\R^N} N(u_n)dx \leq -\int_{\R^d}|u_n|^{2+\epsilon}dx=-\left(\int_{\R^2}v_n(|y|)^{2+\epsilon}dy \right)\left(\int_{\R^{N-2}} f(|z|)^{2+\epsilon}dz\right)=-O(R_n^{-\frac{1}{2}\epsilon N})
$$
$$\int\, \frac{\ell^2}{r^2}u_n^2\, dx \leq \frac{\ell^2}{R_n^2}\left( \int_{R_2} v_n(|y|)^2 dy\right)\left( \int_{\R^{N-2}}\, f(|z|)^2 dz\right)=O(R_n^{-4})
$$
The previous estimates imply for $R_n\rightarrow \infty$ and $0<\epsilon<\frac{4}{N}$ that
$$\int \left( \frac{1}{2} |\nabla u_n|^2 +\frac 12\frac{\ell^2}{r^2}|u_n|^2+ N(u_n) \right)dx\rightarrow 0^-$$
i.e that
$$\inf_{(u,\omega)\in C_{h}} \Lambda(u,\omega) \leq \inf_{(u,-m)\in C_{h}}\Lambda(u,-m)<m.$$
\end{proof}
\subsection{The stability problem} \label{sec:stab-th}

We now discuss the possibility that the vortices solutions to \eqref{KG} found in the previous section are \textit{orbitally stable}, in the sense of the following definition (see e.g. \cite{gss} and \cite{bbbm}). For simplicity of notations we consider the case $N=3$, so that we can use expressions \eqref{amomentum} and \eqref{mom-static} for the angular momentum. However the same argument holds in any dimension $N\ge 3$.

\begin{definition}\label{def-os}
A vortex solution $\bar \psi(t,x) = u(x) e^{\imath (\ell \vartheta (y) - \omega t)}$ is called orbitally stable if the set
$$
\Gamma(u,\ell,\omega) := \set{u(x+\chi) e^{\imath (\ell \vartheta (y+\xi) - \omega t-\theta)}\ :\ \chi=(\xi,\zeta)  \in \R^{2}\times \R,\, \theta \in \R}
$$
is stable in the $H^{1}\times L^{2}$ norm, i.e. for any $\varepsilon >0$ there exists $\delta >0$ such that $d(\psi(0,\cdot), \Gamma) < \delta$ implies $d(\psi(t,\cdot), \Gamma) < \varepsilon$ for all $t\in \R$, where
\begin{equation}
d( \psi (t,\cdot), \bar \psi(t,\cdot)) := \| \psi(t, \cdot) - u(\cdot) e^{\imath (\ell \vartheta (\cdot) - \omega t)} \|_{H^{1}} + \| \partial_{t} \psi(t, \cdot) - (-\imath \omega) u(\cdot) e^{\imath (\ell \vartheta (\cdot) - \omega t)} \|_{L^{2}}
\label{os_continuo}
\end{equation}
\end{definition}

To establish orbital stability there are essentially two methods used in literature, one based on the Lions's Compactness-Concentration Lemma and one introduced by Shatah (see \cite{gss}). However both methods use the idea that stability can be obtained 
for solutions that are non-degenerate points of global minimum of the energy functional constrained on the manifold of fixed charge. 
Hence we could investigate whether our solutions $\bar \psi(t,x) = u(x) e^{\imath (\ell \vartheta (y) - \omega t)}$ are points of minimum 
for the energy $\mathcal E$ constrained on the manifold of fixed charge ${\mathcal H}(\bar \psi)$ and fixed angular 
momentum ${\mathbf M}(\bar \psi)$. Unfortunately it turns out that this is not the case, as we show in the main result of this section 
Theorem \ref{prop-not-min}. However it remains open the possibility that these solutions are non-degenerate points of local minimum, as it happens for solutions found in \cite{bona} for \eqref{KG} for solitons with vanishing angular momentum. This weaker property would be sufficient for orbital stability, but this is still an open question.

Let us recall the expressions ${\mathcal E}$ \eqref{energy} and $E$ \eqref{en-static} for the energy on a general function $\psi \in H^1(\R^3,\C)$ and on the vortices solutions satisfying the ansatz \eqref{ansatz} respectively. The solutions $\bar \psi$ in the previous section have been found as points of global minimum for the energy $E$ on the manifold of couples $(u,\omega)$ with fixed charge. We now show that these solutions do not attain the global minimum of the energy $\mathcal E$.

\begin{theorem} \label{prop-not-min}
For any fixed $\ell\in \Z \setminus \{0\}$, let $\bar \psi(t,x) = \bar u(x) e^{\imath (\ell \vartheta (y) - \bar \omega t)}$ be a solution of \eqref{KG}, with $h = {\mathcal H}(\bar \psi)$ and ${\mathbf m}= {\mathbf M}(\bar \psi)$, found as in the proof of Theorem \ref{stand}, that is $E(\bar u,\bar \omega)$ is the minimum of $E$ on the manifold $C_h$. Then
$$
\inf\limits_{{\mathcal H}=h,\, {\mathbf M}={\mathbf m}}\, {\mathcal E} = \inf\limits_{{\mathcal H}=h}\, {\mathcal E} = \inf\limits_{(u,\omega) \in C_{h}}\, E(u,\omega,0) < {\mathcal E}(\bar \psi)
$$
\end{theorem}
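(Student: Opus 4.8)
The plan is to establish the chain by treating its three links separately, with the strict inequality and the first equality carrying the real content. Throughout I regard a state as a pair $(\psi,\partial_{t}\psi)\in H^{1}(\R^{3},\C)\times L^{2}(\R^{3},\C)$, so that $\mathcal{E}$, $\mathcal{H}$ and $\mathbf{M}$ from \eqref{energy}, \eqref{cha}, \eqref{amomentum} are functionals of such pairs and the first two infima are taken over them. Abbreviating $(A):=\inf_{\mathcal{H}=h,\,\mathbf{M}=\mathbf{m}}\mathcal{E}$, $(B):=\inf_{\mathcal{H}=h}\mathcal{E}$ and $(C):=\inf_{(u,\omega)\in C_{h}}E(u,\omega,0)$, the mere inclusion of one more constraint gives $(A)\ge(B)$ for free. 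It therefore suffices to prove $(B)=(C)$, then $(C)<\mathcal{E}(\bar\psi)$, and finally $(A)\le(C)$; these together with $(A)\ge(B)=(C)$ close the chain.

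First I would prove $(B)=(C)$ by reducing the full energy at fixed charge to the one–variable functional $J_{h}(\cdot,0)$ of \eqref{j-def}. Given any state of charge $h$, Cauchy--Schwarz applied to $h=\mathrm{Im}\int\partial_{t}\psi\,\overline{\psi}$ gives $\tfrac12\|\partial_{t}\psi\|_{2}^{2}\ge h^{2}/(2\|\psi\|_{2}^{2})$, with equality exactly when $\partial_{t}\psi=-\imath\omega\psi$ and $\omega=-h/\|\psi\|_{2}^{2}$; writing $\psi=ue^{\imath S}$ and discarding the non-negative term $\tfrac12 u^{2}|\nabla S|^{2}$ then yields $\mathcal{E}\ge\int[\tfrac12|\nabla u|^{2}+F(u)]\,dx+\tfrac{h^{2}}{2\|u\|_{2}^{2}}=J_{h}(u,0)$, while the standing wave $\psi=ue^{-\imath\omega t}$ realises equality. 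Hence $(B)=\inf_{u\in H^{1}}J_{h}(u,0)$. A rearrangement (P\'olya--Szeg\H{o}) argument reduces this infimum to radial $u$, and since the axis $\{y=0\}$ has codimension two and hence zero $H^{1}$-capacity, any radial competitor is approximated in $H^{1}$ by cylindrically symmetric functions vanishing near the axis (a logarithmic cut-off in $r$ costs vanishing Dirichlet energy), i.e. by elements of $\tilde H^{1}_{c}$. This identifies $\inf_{u\in H^{1}}J_{h}(u,0)$ with $\inf_{u\in\tilde H^{1}_{c}}J_{h}(u,0)=(C)$, so $(B)=(C)$. The strict inequality is then immediate: since $(\bar u,\bar\omega)\in C_{h}$ and $\ell\ne0$ with $\bar u\not\equiv0$, one has $(C)\le E(\bar u,\bar\omega,0)=E(\bar u,\bar\omega,\ell)-\tfrac12\ell^{2}\int\bar u^{2}/r^{2}\,dx=\mathcal{E}(\bar\psi)-\tfrac12\ell^{2}\int\bar u^{2}/r^{2}\,dx<\mathcal{E}(\bar\psi)$.

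The core of the argument is $(A)\le(C)$: for every $\eta>0$ I must exhibit a state of charge exactly $h$ and angular momentum exactly $\mathbf{m}=(0,0,\ell h)$ with energy below $(C)+\eta$. The idea is that a fixed amount of angular momentum can be carried at arbitrarily small energy and charge by a rapidly spinning cloud pushed to infinity. Concretely I would superpose two configurations with disjoint supports, so that energies, charges and angular momenta add with no interaction: a \emph{main} piece, a near-minimiser of $(C)$ concentrated near the origin, of charge close to $h$ and (being an $\ell=0$ standing wave with real amplitude) zero angular momentum; and a \emph{satellite} $s\,g(r,z)\,e^{\imath(n\theta-\omega t)}$ of large winding $n$ (of the sign of $\ell$) supported on $r\in[R_{n},2R_{n}]$, $|z|\le1$, with $\omega=-m$. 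For the satellite $\mathbf{M}=(0,0,n\,H_{\mathrm{sat}})$ and cylindrical symmetry kills the first two components, so choosing $H_{\mathrm{sat}}=\ell h/n>0$ produces exactly the required $z$-component $\ell h$, while $H_{\mathrm{sat}}\to0$ as $n\to\infty$. A scaling count as in the proof of Theorem \ref{mezzo-2} shows that the satellite's bulk (mass-plus-charge) energy is $O(H_{\mathrm{sat}})=O(1/n)$, its winding energy $\tfrac12 n^{2}\int u^{2}/r^{2}$ is $O(n^{2}H_{\mathrm{sat}}/R_{n}^{2})=O(n/R_{n}^{2})$, and its gradient terms are likewise negligible; taking $R_{n}=n\to\infty$ drives the whole satellite energy to zero. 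After a harmless adjustment of $\omega$ in the main piece to restore total charge exactly $h$, the superposition has charge $h$, angular momentum $\mathbf{m}$ and energy $(C)+o(1)$, giving $(A)\le(C)$ and hence, with $(A)\ge(B)=(C)$, the full chain.

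The main obstacle is precisely this last construction: one must realise a prescribed non-vanishing angular momentum while keeping the charge pinned at $h$ and paying no energy in the limit. Because the centrifugal energy $n^{2}\int u^{2}/r^{2}$ grows quadratically in the winding number, the construction only closes by tuning several competing scales simultaneously — small satellite charge $\sim1/n$, large radius $R_{n}$, small amplitude $s$ — and checking that every lower-order term (the gradients and the super-quadratic part $N$ of $F$, which is $o(s^{2})$ for the vanishing amplitudes used) is genuinely subleading under this tuning is the one delicate point. Everything else, namely $(A)\ge(B)$, the reduction of $\mathcal{E}$ to $J_{h}(\cdot,0)$, and the dropping of the positive centrifugal term for the strict inequality, is routine.
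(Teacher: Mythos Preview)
Your proof is correct and follows the same three-step skeleton as the paper, but two of the links are handled differently. For the strict inequality you simply evaluate $E(\cdot,\cdot,0)$ at the given minimiser $\bar u$ and drop the positive centrifugal term, obtaining $(C)\le E(\bar u,\bar\omega,0)<E(\bar u,\bar\omega,\ell)=\mathcal{E}(\bar\psi)$; this is more economical than the paper's Step~1, which argues by contradiction that $e_{r,\ell}=e$ would force a minimising sequence for $e_{r,\ell}$ to satisfy $\int u_n^2/r^2\to0$, hence $u_n\rightharpoonup0$ in $L^2$, contradicting the compactness from Lemma~\ref{nonvanish}. Your shortcut is legitimate precisely because the hypothesis hands you an actual minimiser $\bar u\not\equiv0$; the paper's argument establishes the slightly stronger statement that the two infima differ even without invoking attainment. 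For the equality $(B)=(C)$ the paper simply cites \cite{bbbm}, whereas you sketch the Cauchy--Schwarz reduction plus rearrangement plus the capacity argument for approximation in $\tilde H^1_c$; this is a fair outline of what that reference contains.

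For the main step $(A)\le(C)$ both proofs superpose a near-minimising $\ell=0$ main piece with a low-energy satellite carrying all of $\mathbf m$, but the geometries differ. The paper keeps the satellite $v$ of fixed size and winding $\ell'$ (with $\ell'\epsilon=\ell h$, so $\ell'\to\infty$ as $\epsilon\to0$) and \emph{translates} it to $|y_n|\to\infty$, killing the centrifugal cost $\tfrac{\ell'^2}{2}\int v^2/r^2$ by the inner limit in $n$ before sending $\epsilon\to0$; the transverse components of $\mathbf M$ vanish by the $z$-parity of the radially symmetric $v$. Your satellite is instead a cylindrically symmetric annulus at radius $R_n\to\infty$ with winding $n\to\infty$, so the transverse components of $\mathbf M$ vanish automatically and a single diagonal limit $R_n=n$ suffices. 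Both constructions close; yours is somewhat cleaner in that it avoids the nested double limit and the implicit integrality constraint on $\ell'$ in the paper's \eqref{succ4}--\eqref{succ6}.
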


\begin{proof}
\textit{Step 1.} ${\mathcal E}(\bar \psi) > \inf\limits_{(u,\omega) \in C_{h}}\, E(u,\omega,0)$

First of all it holds
$$
{\mathcal E}(\bar \psi) = E(\bar u, \bar \omega, \ell) \ge \inf\limits_{(u,\omega) \in C_{h}}\, E(u,\omega,\ell)
$$
which is given by construction. Moreover we show that
$$
e_{r,\ell}:= \inf\limits_{(u,\omega) \in C_{h}}\, E(u,\omega,\ell) > \inf\limits_{(u,\omega) \in C_{h}}\, E(u,\omega,0)=: e
$$
Let $(u_{n}, \omega_{n}) \in C_{h}$, with $u_{n}\in \tilde H^{1}_{c}$, be a minimising sequence for $e_{r,\ell}$. By Lemma \ref{apriori} there exist $K_1, K_2 >0$ such that
\begin{equation} \label{s1}
K_{1} \le \| u_{n} \|_{L^{2}}\le \| u_{n} \|_{H^1}\le \| u_n \|_{\tilde H^1_c} \le K_{2}
\end{equation}
If by contradiction $e_{r,\ell} = e$ then, using the notation $B_{R}(0):= \set{ y\in \R^{2}\ :\ r< R}$, we have
\begin{eqnarray}
& \lim\limits_{n\to \infty} \, \int_{\R^{3}}\, \frac{u_{n}^{2}}{r^{2}}\, dx = 0 \label{s2.1}\\[0.2cm]
& \lim\limits_{n\to \infty} \, \int_{_{B_{R}(0)\times \R}}\, u_{n}^{2}\, dx = 0 \qquad \forall\, R>0 \label{s2.2}
\end{eqnarray}
Indeed by definition of $E(u,\omega,\ell)$ it immediately follows that 
$$e_{r,\ell} \ge e + \inf\limits_{(u,\omega) \in C_{h}}\, \frac 12 \int_{\R^{3}}\, \frac{\ell}{r^{2}} u^{2}\, dx$$
hence \eqref{s2.1} is proved. Let us now write for any $R>0$
$$
\left( \int_{_{B_{R}(0)\times \R}}\, u_{n}^{2}\, dx \right)^{2}= 4\pi^{2}\, \left( \int_{\R}\, \left( \int_{0}^{R}\,  r u_{n}^{2} \, dr \right) dz \right)^{2} \le 
$$
$$
\le 4\pi^{2}\, \left( \int_{\R} \int_{0}^{R}\,  r^{3} u_{n}^{2} \, dr dz \right) \, \left( \int_{\R} \int_{0}^{R}\,  \frac{u_{n}^{2}}{r} \, dr  dz\right) \le 4\pi^{2}\, R^{2}\, \|u_{n}\|_{L^{2}}^{2} \, \int_{\R^{3}}\, \frac{u_{n}^{2}}{r^{2}}\, dx 
$$ 
Hence \eqref{s2.2} follows from \eqref{s1} and \eqref{s2.1}. By \eqref{s2.2} it follows that $u_{n}$ converges to 0 weakly in $L^{2}$, which contradicts \eqref{converg} (see also Lemma \ref{nonvanish}).

\vskip 0.2cm
\noindent \textit{Step 2.} $\inf\limits_{{\mathcal H}=h}\, {\mathcal E} = \inf\limits_{(u,\omega) \in C_{h}}\, E(u,\omega,0)$

See \cite{bbbm}.

\vskip 0.2cm
\noindent \textit{Step 3.} $\inf\limits_{{\mathcal H}=h,\, {\mathbf M}={\mathbf m}}\, {\mathcal E} = \inf\limits_{{\mathcal H}=h}\, {\mathcal E}$

The inequality
$$
\inf\limits_{{\mathcal H}=h,\, {\mathbf M}={\mathbf m}}\, {\mathcal E} \ge \inf\limits_{{\mathcal H}=h}\, {\mathcal E}
$$
is immediate since on the left hand side the infimum is taken on a smaller set, since we constrain also on the manifold of fixed absolute value for the angular momentum. The opposite inequality is proved by constructing a sequence $\set{\psi_{n}}$ such that ${\mathcal H}(\psi_{n})=h$ and ${\mathbf M}(\psi_{n})={\mathbf m}$, which satisfies 
\begin{equation} \label{da-fare}
{\mathcal E}(\psi_{n}) \to \inf\limits_{(u,\omega) \in C_{h}}\, E(u,\omega,0) =\inf\limits_{{\mathcal H}=h}\, {\mathcal E}
\end{equation}
For any fixed $\eps>0$ let $U(x)$ and $v(x)$ be non-negative radially symmetric functions in $C^\infty_0(\R^3,\R^+)$, let $\omega <0$ and $\ell\in \Z$ which satisfy
\begin{eqnarray}
& {\mathcal H}(U(x) e^{-\imath \omega t}) = - \int_{\R^N}\, \omega U^2\, dx = h-\eps \label{succ1}  \\[0.2cm]
& E(U,\omega,0) = \inf\limits_{(u,\omega) \in C_{h-\eps}}\, E(u,\omega,0) +\eps \label{succ2} \\[0.2cm]
& {\mathbf M}(U(x) e^{-\imath \omega t}) = {\mathbf 0} \label{succ3} \\[0.2cm]
& {\mathcal H}(v(x) e^{\imath(\ell \theta(y) - \omega t)}) = - \int_{\R^N}\, \omega v^2\, dx = \eps \label{succ4} \\[0.2cm]
& \| v \|_{H^1} \le 2\sqrt{\frac{\eps}{|\omega|}} \label{succ5} \\[0.2cm] 
& {\mathbf M}(v(x) e^{\imath(\ell \theta(y) - \omega t)}) = \left(0,\, 0,\, -\ell\, \int_{\R^N}\, \omega v^2\, dx\right) = {\mathbf m} \label{succ6}
\end{eqnarray}
Recalling the notation $x=(y,z) \in \R^2 \times \R$, we now define
$$
\psi_n(x) = U(x) e^{-\imath \omega t} + v(y-y_n,z) e^{\imath(\ell \theta(y) - \omega t)}
$$
where $\set{y_n}\subset \R^2$ is a sequence satisfying $|y_n| \to \infty$. For $n$ big enough the supports of $U$ and $v(y-y_n,z)$ are disjoint, hence since the integrals in the expressions of charge and angular momentum are translation invariant, we obtain
$$
{\mathcal H}(\psi_{n})= {\mathcal H}(U(x) e^{-\imath \omega t}) + {\mathcal H}(v(x) e^{\imath(\ell \theta(y) - \omega t)}) = h
$$
from \eqref{succ1} and \eqref{succ4}, 
$$
{\mathbf M}(\psi_{n})= {\mathbf M}(U(x) e^{-\imath \omega t}) + {\mathbf M}(v(x) e^{\imath(\ell \theta(y) - \omega t)}) = {\mathbf m}
$$
from \eqref{succ3} and \eqref{succ6}. We now compute the energy of $\psi_n$. Again, since supports are disjoint, we can write
$$
\inf\limits_{(u,\omega) \in C_{h}}\, E(u,\omega,0) \le {\mathcal E}(\psi_n) = {\mathcal E}(U(x) e^{-\imath \omega t}) + {\mathcal E}(v(x) e^{\imath(\ell \theta(y) - \omega t)}) = 
$$
$$
= E(U,\omega,0) + E(v,\omega,0) + \frac 12\, \int_{\R^3}\, \frac{\ell^2}{r^2} v^2(y-y_n,z) dydz =
$$
$$
=  \inf\limits_{(u,\omega) \in C_{h-\eps}}\, E(u,\omega,0) +\eps + E(v,\omega,0) + \frac 12\, \int_{\R^3}\, \frac{\ell^2}{(r+|y_n|)^2} v^2(y,z) dydz \le
$$
$$
\le \inf\limits_{(u,\omega) \in C_{h-\eps}}\, E(u,\omega,0) +\eps + \nu\left(2\sqrt{\frac{\eps}{|\omega|}}\right) + \frac{\ell^2}{2}\, \frac{1}{|y_n|}\, \|v\|_{L^2}^2
$$
where $\nu(\cdot)$ is the modulus of continuity of the energy $E(u,\omega,0)$ with respect to the $H^1$ norm, and we have used \eqref{succ2} and \eqref{succ5}. 

Finally we remark that
$$
\inf\limits_{(u,\omega) \in C_{h-\eps}}\, E(u,\omega,0) = \inf\limits_{u\in H^1\setminus \set{0}}\, \left( \int\left[ \frac {1}{2}\left| \nabla u\right| ^{2}+W(u)\right] dx + \frac 12 \frac{(h-\eps)^2}{\int u^2 dx} \right) \le
$$
$$
\le \inf\limits_{u\in H^1\setminus \set{0}}\, \left( \int\left[ \frac {1}{2}\left| \nabla u\right| ^{2}+W(u)\right] dx + \frac 12 \frac{h^2}{\int u^2 dx} \right) = \inf\limits_{(u,\omega) \in C_{h}}\, E(u,\omega,0)
$$
We can conclude that there exists $n(\eps)$ for which
$$
\inf\limits_{(u,\omega) \in C_{h}}\, E(u,\omega,0) \le {\mathcal E}(\psi_{n(\eps)}) \le \inf\limits_{(u,\omega) \in C_{h}}\, E(u,\omega,0) +2\eps + \nu\left(2\sqrt{\frac{\eps}{|\omega|}}\right)
$$
and \eqref{da-fare} is proved.
\end{proof}

\section{Numerical approach to hylomorphic vortices for NKG} \label{sec:num-vort}

In the current section we consider for ease of presentation only the case $N=2$, so that $u(x)=u(r)$ is a radially symmetric function. Existence of hylomorphic vortices for $N=2$ is shown in \cite{befo09}. In particular, once introduced a straightforward method for the numerical construction of vortices, we present relevant simulations addressing their orbital stability. 

\subsection{Numerical construction of vortices} 

It has been shown in Section \ref{sec:ex-vort} that, for fixed $\ell\in \Z\setminus \{0\}$, vortices can be found as points of minimum for the energy $E(u,\omega,\ell)$
on the manifold $C_{h}$ of fixed charge $H(u,\omega) = h$.
Such a constrained minimization problem in two variables can be reformulated as an unconstrained minimization problem in one variable, as shown above in \eqref{j-def}. In particular, hereafter we can use $u\in \tilde H^{1}_{c}$ as independent variable and, letting 
\begin{equation} \label{omega_dalla_carica}
\omega \left( u,h \right) := - \frac{h}{\int u^{2}\, dx}\, ,
\end{equation}
numerically study the minimization problem for the hylomorphy ratio 
\begin{equation} \label{ganzetto}
\Lambda_{h} \left(u,\ell \right) := \frac{J_{h}(u,\ell)}{h} =
\frac{1}{h} \int \left[ \frac{1}{2}\left| \nabla u\right| ^{2}
+\frac{1}{2}\frac{\ell^2}{r^2}u^{2}
+W(u)\right] dx +\frac{h}{2\int u^{2}dx}.
\end{equation}
on $\tilde H^{1}_{c}$. To this purpose, we consider the evolutionary problem (which generalizes the one treated in \cite{bel})
\begin{equation} \label{flusso_parabolico}
\left\{
\begin{array}{rclcl}
    \partial_\tau u (r,\tau) & = &  - h \, \mathrm{d} \Lambda _{h} =
    \Delta u
    - W^\prime(u)
    + \left(\omega^2 - \displaystyle \frac{\ell^2}{r^2}\right)u
    & \mathrm{in} & (0,\tilde{r})\, \times \R^+ \\
    u(r,\tau) & = & 0
    & \mathrm{on} & \set{r = 0} \times \R^+ \\
    u(r,\tau) & = & 0
    & \mathrm{on} & \set{r = \tilde{r}} \times \R^+ \\
\end{array}
\right.
\end{equation}
in which $\tau$ represents a pseudo-time,
$\tilde{r}$ denotes a chosen upper bound for the $r-$domain (discussed below) and
$\omega=\omega(u,h)$ as in (\ref{omega_dalla_carica}).
The evolution of $u$ according to (\ref{flusso_parabolico}) is a gradient flow and therefore
a non-increasing trend of $\Lambda_{h} (u(r,\tau))$ versus $\tau$ is obtained, well suited as
the sought minimization process.

The problem (\ref{flusso_parabolico}) was then discretized by a classical line method; in particular,
2nd order and 1st order accurate finite differences were respectively used for space and time
discretization. Moreover,
$\tilde{r}$ was chosen large enough to suitably contain the vortex profile support,
and the chosen charge $h$ was directly enforced at each time level by
evaluating the frequency $\omega^n$ (superscript $n=0,1,2,\dots$ hereafter denoting
the $n$-th time level) from the corresponding
numerical solution through the discrete counterpart of
(\ref{omega_dalla_carica}).
Finally, time-advancing was stopped when
$|\omega^{n+1}-\omega^n|/\omega^n < e_\omega$, where $e_\omega$ represents a predefined
threshold (a relative error on $\Lambda_h$ was considered as well).
It is worth remarking that:
(i) the proposed method managed to efficiently converge by starting from several initial guesses;
(ii) for many of the carried out numerical experiments it was necessary to adopt a very severe
threshold, e.g. $e_\omega=10^{-11}$, in order to obtain discretization-independent
results (this point suggested the possible presence of rather flat regions around the sought minimum, somehow
challenging the achievement of the ``true'' radial profile).
\\
Let us now consider, for the sake of illustration, the following potential: 
\begin{equation}
W \left( \psi \right) =
\left| \psi \right| -\log \left( 1+\left| \psi \right|\right)
= \frac 1 2\, |\psi|^{2} - \frac 1 3\, |\psi|^{3} + o(|\psi|^{3}),
\label{ex_W_gamma}
\end{equation}
for which NKG specializes to
$$
\square \psi +\frac{\psi }{1+\left| \psi \right| }=0.
$$

\noindent
Figure \ref{fig_u_vs_h_l} shows some
radial profiles $u(r)$ of two-dimensional (2D) vortices
obtained by minimizing the functional
(\ref{ganzetto}) coupled with (\ref{ex_W_gamma}), through the
procedure described above.
It should be noticed that: (i) for a given $\ell$, the peak of the $u$ profile significantly
decreases by decreasing the charge $h$, while the associated support is less affected;
(ii) for a given charge $h$, the peak of the $u$ profile smoothly 
decreases and drifts away from the origin by increasing $\ell$.
Moreover, Figure \ref{fig_trend_om_lambda_vs_h_l} shows characteristic
trends for both frequency and hylomorphy ratio, still versus $h$ and $\ell$.
It should be noticed how both $|\omega|$ and $\Lambda$ tend to $1$ for $h\to0$,
as well as for increasing values of $\ell$.
Finally, once obtained $u(r)$, it is straightforward to build vortices through (\ref{ansatz});
two examples are shown in Figure \ref{fig_2Dvort} (a grid is added for ease of rendering).

\begin{figure}
    \begin{center}
    \subfigure[]
    {\includegraphics[width=6.0cm,height=4.5cm]{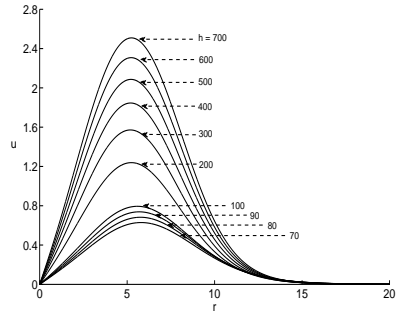}}
    \subfigure[]
    {\includegraphics[width=6.0cm,height=4.5cm]{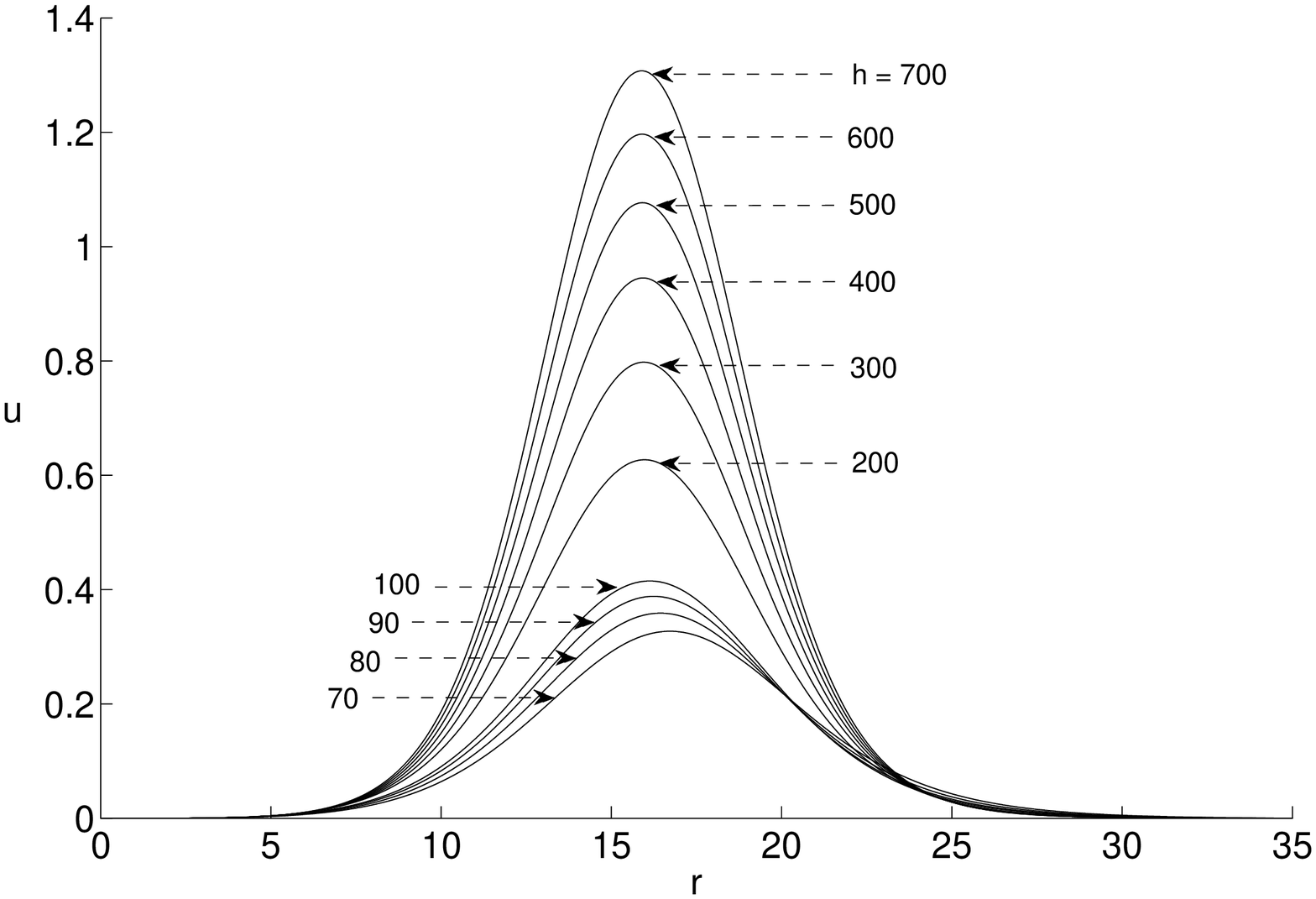}}
    \subfigure[]
    {\includegraphics[width=6.0cm,height=4.5cm]{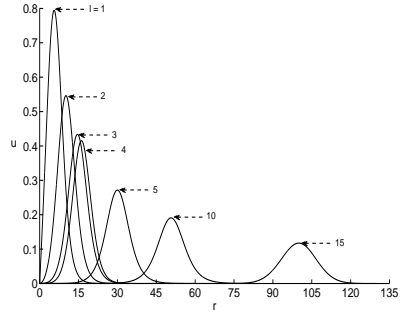}}
    \subfigure[]
    {\includegraphics[width=6.0cm,height=4.5cm]{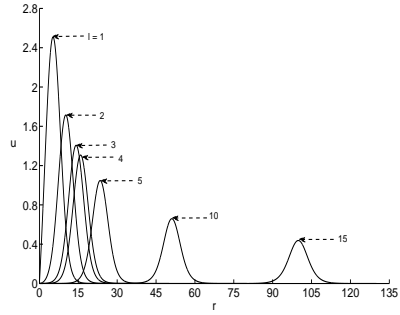}}
    \caption{Radial profiles $u(r)$ of 2D vortices:
             for given values of $h$,
             having fixed $\ell=1$ (a) and $\ell=4$ (b);
             for given values of $\ell$,
             having fixed $h=100$ (c) and $h=700$ (d).
             $W$ defined as in (\ref{ex_W_gamma}).}
    \label{fig_u_vs_h_l}
    \end{center}
\end{figure}

\begin{figure}
    \begin{center}
    \subfigure[]
    {\includegraphics[width=6.0cm,height=4.5cm]{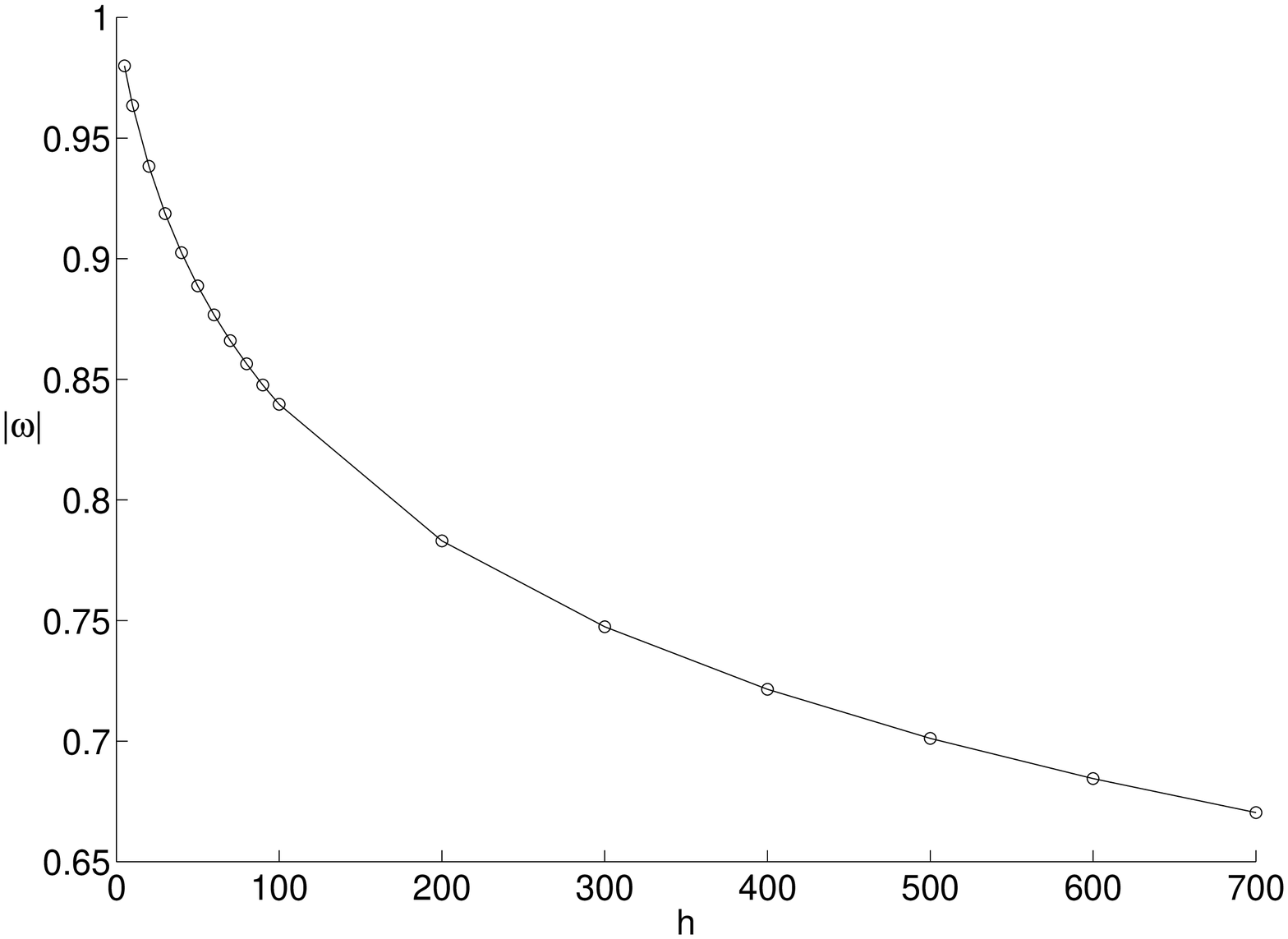}}
    \subfigure[]
    {\includegraphics[width=6.0cm,height=4.5cm]{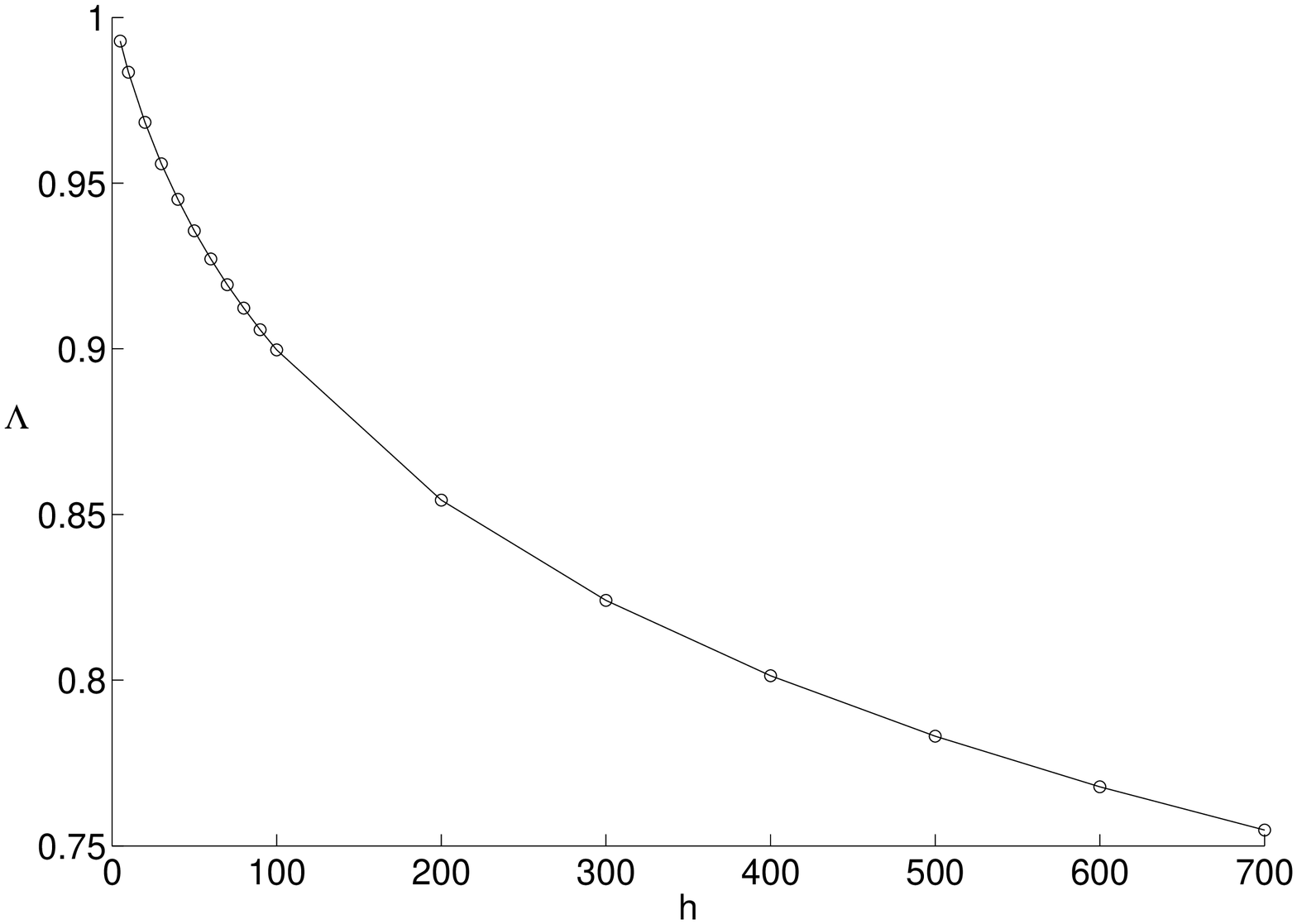}}
    \subfigure[]
    {\includegraphics[width=6.0cm,height=4.5cm]{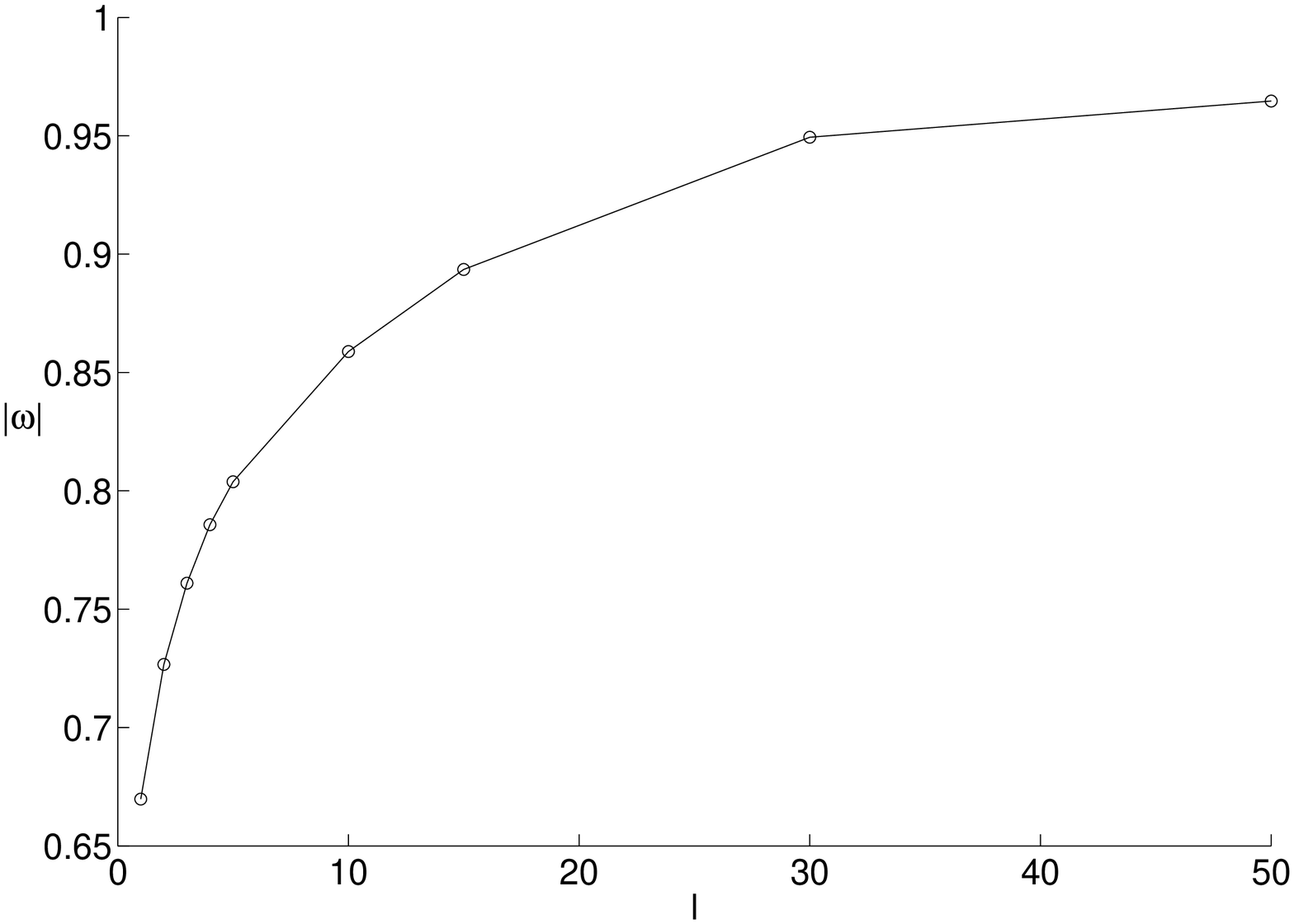}}
    \subfigure[]
    {\includegraphics[width=6.0cm,height=4.5cm]{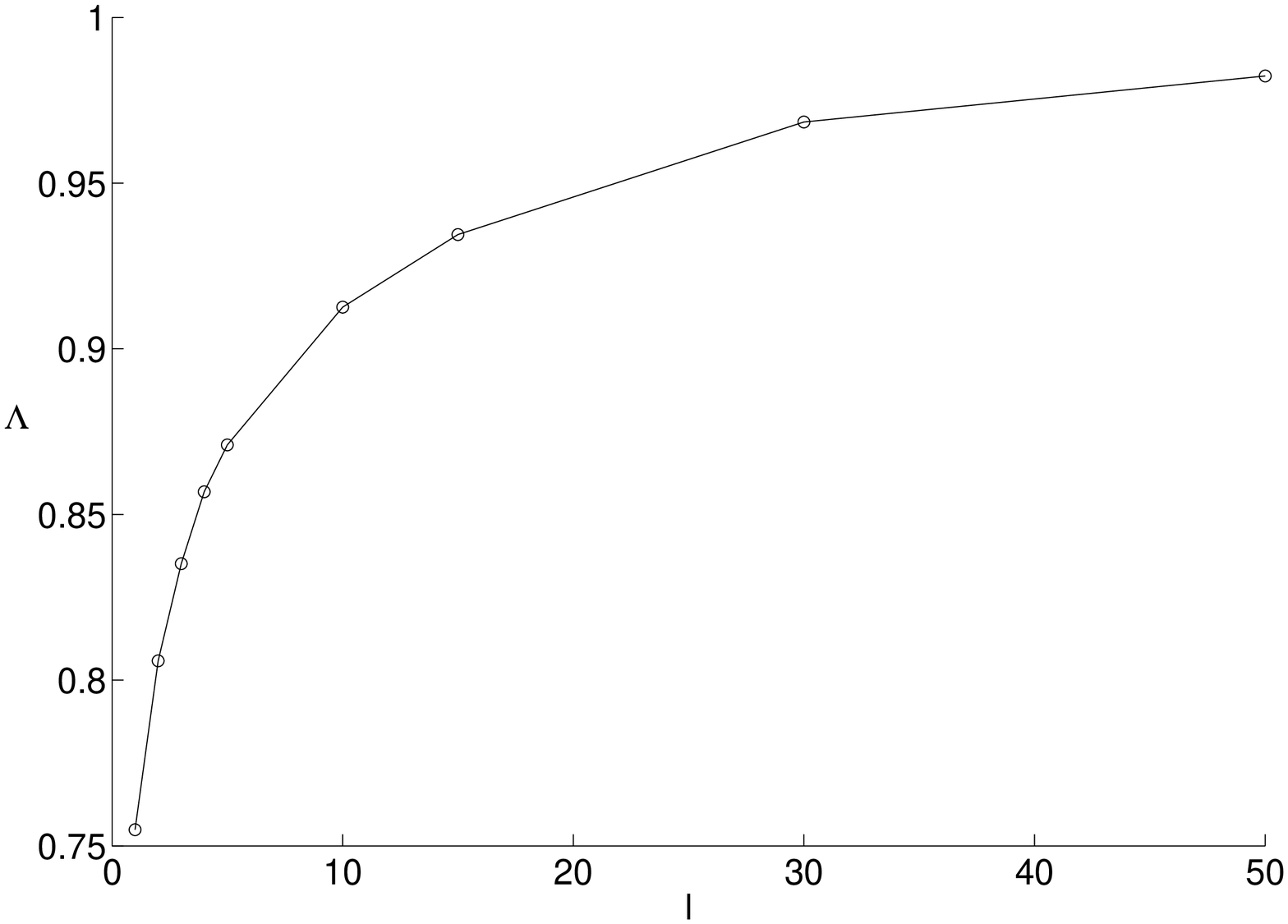}}
    \caption{Frequency (a,c) and hylomorphy ratio (b,d) of 2D vortices:
             as a function of $h$, having fixed $\ell=1$ (a,b);
             as a function of $\ell$, having fixed $h=700$ (c,d).
             Circles in (a,b) are associated with the following values of $h$:
             $\{5,10,20,30,40,50,60,70,80,90,100,200,300,400,500,600,700\}$;
             circles in (c,d) are associated with the following values of $\ell$:
             $\{1,2,3,4,5,10,15,30,50\}$.
             $W$ defined as in (\ref{ex_W_gamma}).}
    \label{fig_trend_om_lambda_vs_h_l}
    \end{center}
\end{figure}

\begin{figure}
    \begin{center}
    \subfigure[]
    {\includegraphics[width=8.0cm,height=6.0cm]{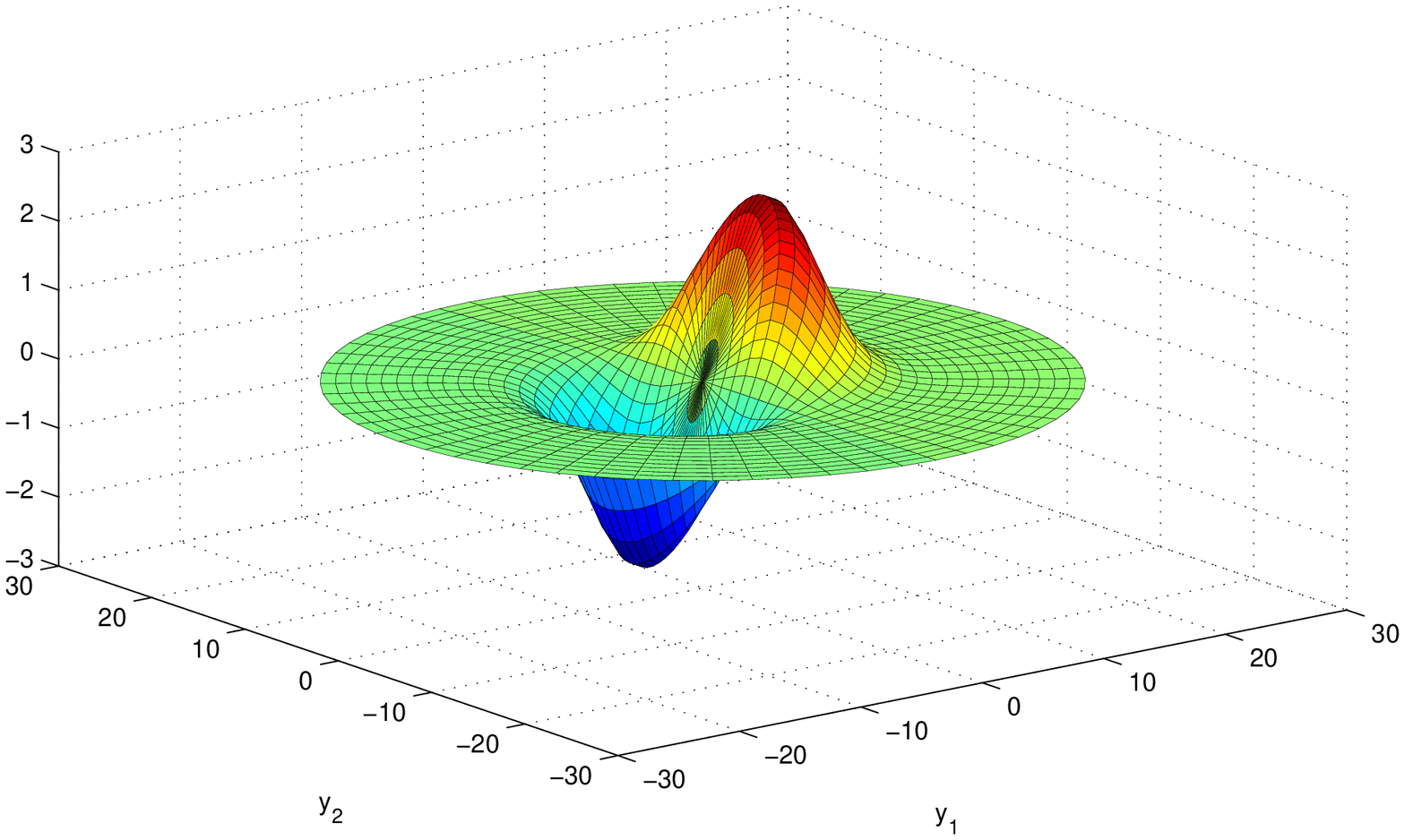}}
    \subfigure[]
    {\includegraphics[width=8.0cm,height=6.0cm]{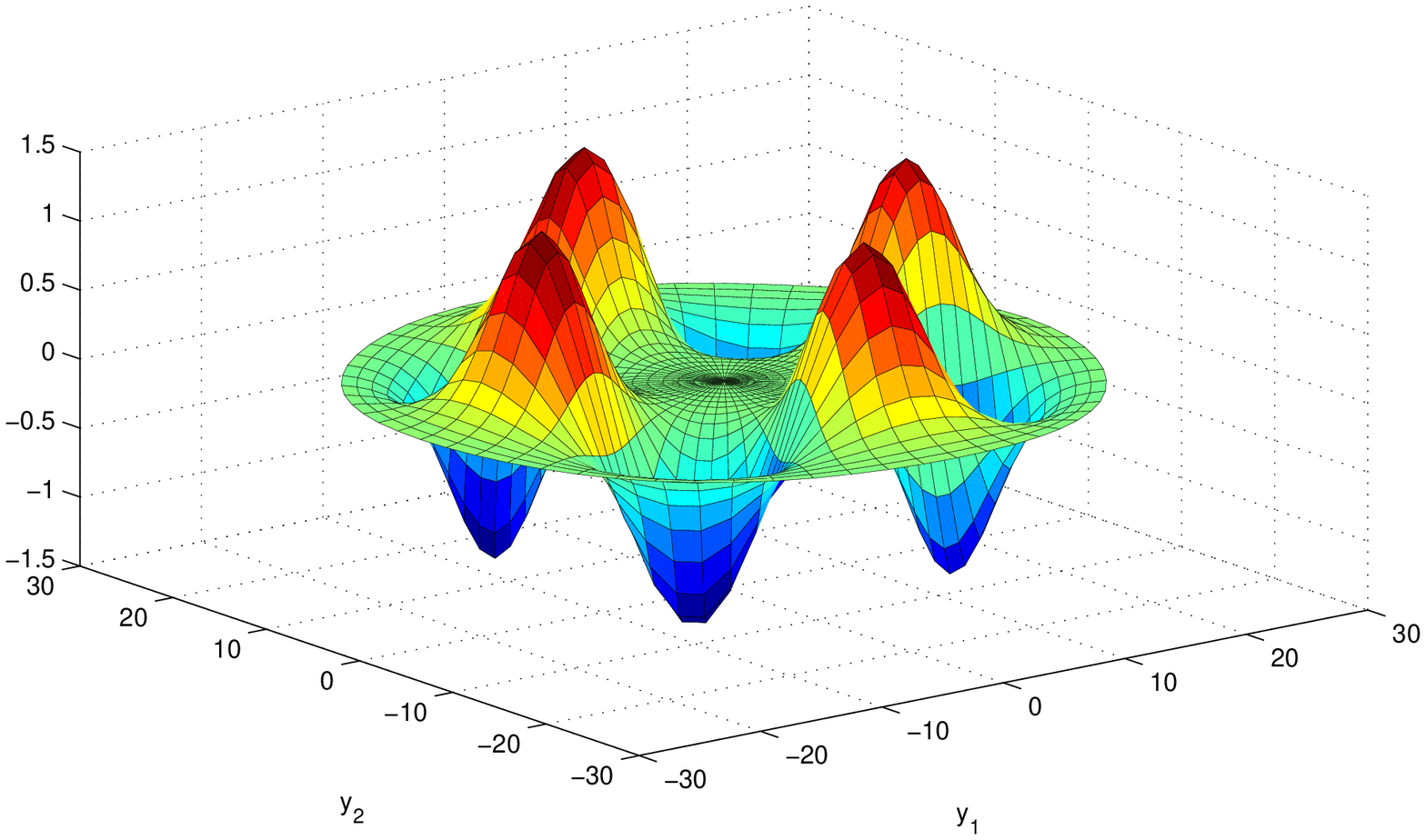}}
    \caption{Surface plot of $\mathrm{Re}\left(\psi\right)$ for a 2D vortex with: $h=700$, $\ell=1$ (a);
             $h=700$, $\ell=4$ (b) (corresponding radial profiles are
             shown in Figure \ref{fig_u_vs_h_l}); 
             $W$ defined as in (\ref{ex_W_gamma}).}.
    \label{fig_2Dvort}
    \end{center}
\end{figure}

\subsection{Numerical experiments on the stability of vortices}

Numerical approximation of NKG has long been studied (see e.g.
\cite{Vu-Quoc1993}, \cite{Duncan1997} and \cite{wang2005}), in order to also investigate nonlinear phenomena
like the dynamics/interaction of solitary waves or other coherent structures.
In light of the relevant literature, we decided to preliminarily assess the suitability of a line method
approach based on centered finite-differencing for Laplacian
discretization and leap-frog time-advancing, by virtue of its relatively simple coding.
In particular, we firstly considered the time-evolution of a (non-rotating) hylomorphic soliton for which orbital stability is proved in \cite{bel},
on a square domain with periodic boundary conditions (2D-torus).
The adopted numerical scheme is symplectic, 
thus being 
able to accurately capture the considered dynamics over a long time-interval
(see e.g. \cite{bridges2006}, \cite{Duncan1997} and \cite{skeel1997}).

More in detail, we firstly chose $h=500$ and we obtained the radial profile and the frequency of a non-rotating soliton as in \cite{bel} (i.e. by a minimization strategy like (\ref{flusso_parabolico}), yet with $\ell=0$),
with $W$ defined as in (\ref{ex_W_gamma}).
We then introduced a square grid with spacing $\delta x$ (and side length large enough to properly contain the soliton support),
and we endowed the aforementioned soliton with speed $v=0.5$ along a grid axis, by Lorentz transform. 
Let us denote by ($\psi^{th}(x,t)$,$\psi_t^{th}(x,t)$) the obtained (translating) theoretical soliton.
Furthermore, once introduced the time-step $\delta t$,
we defined the initial conditions as $\psi^0:=\tilde{\psi}^{th}(x,0)$ and $\psi_t^{-1/2}:=\tilde{\psi}_t^{th}(x,-\delta t/2)$), where
$\tilde{\cdot}$ stands for sampling on the computational grid, and
the $\delta t/2$ time-shift accounts for time-staggering of the
leap-frog scheme \cite{skeel1997}. Therefore, after $n$ time-steps the adopted scheme provides $\psi^n \approx \tilde{\psi}^{th}(x,n \, \delta t)$ and $\psi_t^{n-1/2} \approx \tilde{\psi}_t^{th}(x,\left(n-1/2\right)\, \delta t)$.
Time-staggering clearly affects the numerical approximation of those entities which simultaneously involve $\psi$ and $\psi_t$, like e.g. all NKG first integrals; however, corresponding error can be kept contained by adopting small time-steps, as required by stability constraints (see below).
In light of this aspect and with the main aim of investigating orbital stability, a ``staggered'' discrete counterpart of (\ref{os_continuo}) was introduced, namely the following orbital stability norm:
\begin{equation}
\delta_{OS}^n := \frac{\| \psi^n         - \tilde{\psi}^{th}(x,n \, \delta t) \|_{H^{1}}}
                      {\| \tilde{\psi}^{th}(x,0) \|_{H^{1}}} +
                 \frac{\| \psi_t^{n-1/2} - \tilde{\psi}_t^{th}(x,\left(n-1/2\right)\, \delta t) \|_{L^{2}}}
                      {\| \tilde{\psi}_t^{th}(x,-\delta t/2) \|_{L^{2}}}, \quad n = 0,1,\ldots,
\label{OSN_discreta}
\end{equation}
where discrete $H^{1}$ and $L^{2}$ norms are tacitly understood (they were computed by classical fourth-order Simpson quadrature),
and the denominators (involving time-conserved entities) simply act as scaling factors.
It should be noticed that the adopted initial conditions enforce $\delta_{OS}^0=0$; subsequent evolution represents the main
asset of the considered numerical investigations. Moreover, it is worth remarking that numerical approximations of NKG first integrals
were defined similarly to (\ref{OSN_discreta}), yet corresponding expressions are here omitted for brevity.
Formally, the considered numerical scheme introduces a truncation error $O(\delta x^2 + \delta t^2)$. However, stability constraints
affect the adopted explicit time-advancing (see e.g. \cite{Vu-Quoc1993} and \cite{skeel1997} for some model problems),
so that a ratio $\delta t/\delta x = 1/10$ was adopted for the numerical experiments at hand (it was also
checked that such a ratio provided time-discretization-independent results).
Hence, it was space discretization to
directly modulate the leading term $O(\delta x^2)$ of truncation error.
Consequently, grid coarsening/refinement was regarded to
as a mean for implicitly tuning a perturbation on the theoretical soliton profile which was
exploited to seed the initial conditions.
More in detail, several numerical simulations were carried out by varying $\delta x$; corresponding $\delta_{OS}^n$ trends are
reported in Figure \ref{fig_nos_qball}.

\begin{figure}[h!]
    \begin{center}
    {\includegraphics[width=8.0cm,height=6cm]{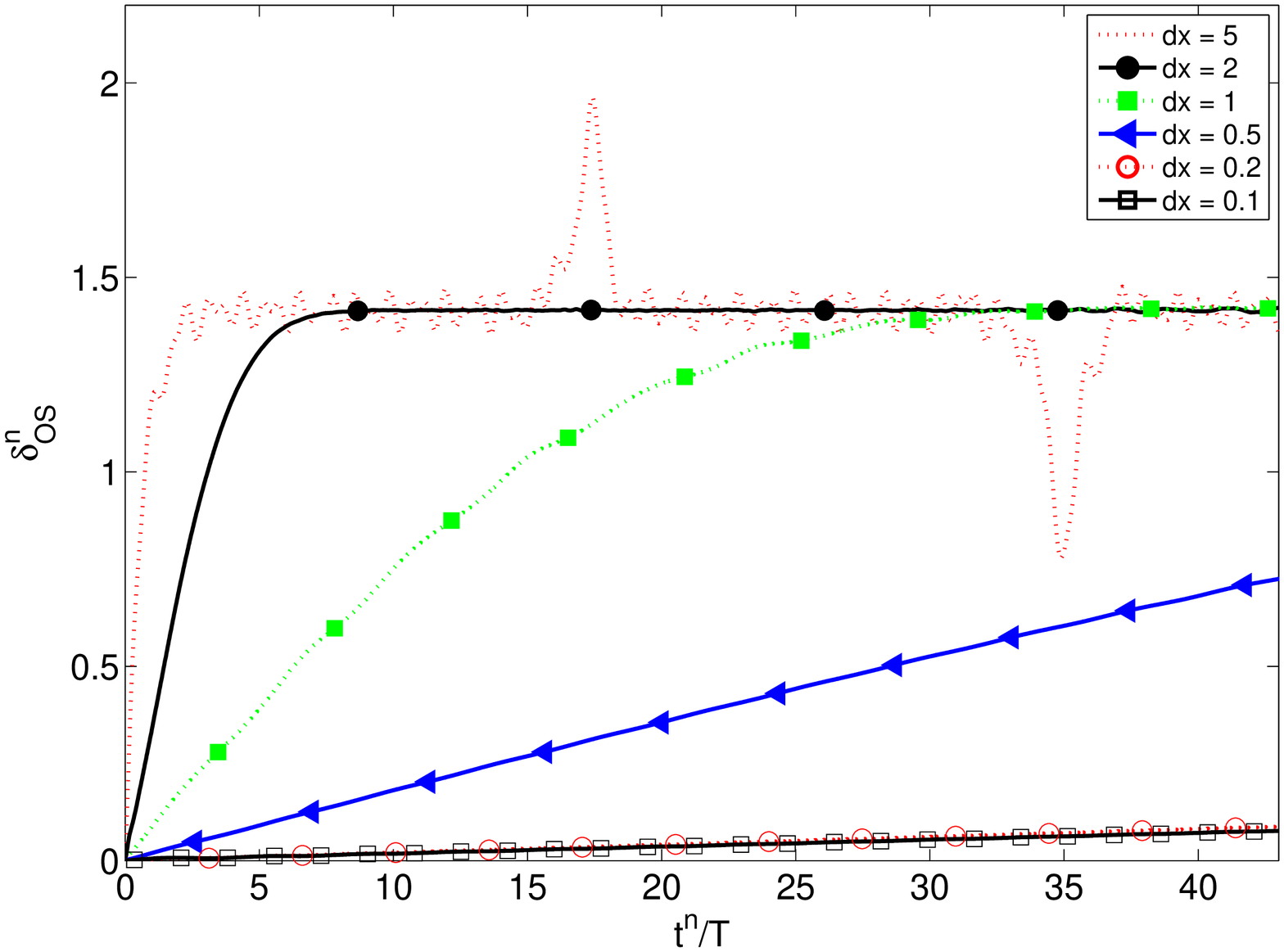}}
    \caption{Exemplificative trends of $\delta_{OS}^n$ versus time $t^n = n\, \delta t$ for
             an orbitally-stable hylomorphic soliton ($\ell=0$, non-rotating) with $h=500$;
             $W$ defined as in (\ref{ex_W_gamma}).
             Time non-dimensionalised through the soliton period $T$.
             Each curve is associated with a grid; corresponding characteristic size $\delta x$ was varied in
             $\{5,2,1,0.5,0.2,0.1\}$.
             Non-rotating solitons were preliminarily simulated,
             in order to assess the suitability of the proposed numerical approach.}
    \label{fig_nos_qball}
    \end{center}
\end{figure}

\noindent
Conservation of NKG first integrals  was systematically monitored for all the 
considered simulations. For instance, energy variation relative to its time-average was in the range
$(-8,8)\cdot 10^{-2}$, $(-1,1)\cdot 10^{-3}$ and $(-2,2)\cdot 10^{-6}$, when respectively adopting $\delta x=5$, $1$ and $0.1$.
Corresponding relative variations for charge were below $10^{-12}$ in absolute value.
These results strongly support the remarkable features of the adopted symplectic integration scheme,
while also establishing sound foundations for the interpretation of the obtained $\delta_{OS}^n$ trends.
On regard,
a rough discretization like e.g. the one associated with $\delta x = 5$ introduced a disrupting perturbation
on the seeded profile: the numerical solution soon lagged behind the moving theoretical one. This, in turn, only periodically allowed for support
overlapping: for almost all times, numerical and theoretical solutions occupied disjoint regions of the computational grid, and this justifies the
asymptotic value around $\sqrt{2}$ in Figure \ref{fig_nos_qball}. Moreover, as grid spacing was reduced, both the initial solution was better sampled
(with respect to the theoretical shape) and the truncation error decreased.
This, in turn, incrementally led to
grid-independent numerical results (see the curves clustering
in Figure \ref{fig_nos_qball}, where lower values of $\delta x$ are not shown for ease of readability),
which also exhibit an orbitally stable character (the weakly increasing trend is merely
due to a cumulative effect of discretization errors over time).
The obtained numerical results, which are fully consistent with the underlying theoretical framework, encouraged to extend to the vortex case the proposed approach, as well as the idea of interpreting space discretization as a mean for issuing tailored
perturbations with respect to theoretical profile of the considered solitons.

The same numerical approach was therefore applied to rotating solitons, by firstly considering 
a vortex with $\ell=4$ and charge $h=500$.
In particular, a polar grid was introduced and the characteristic discretization size $\delta r$ along the radial direction was
chosen as the leading term for truncation error. This was achieved by systematically adopting a finer discretization along the
circumpherential direction, and by choosing $\delta t/\delta r = 1/50$ in order to ensure stability (indeed, the ratio $1/10$ previously adopted for non-rotating solitons turned out to be insufficient in the present case).
The initial conditions were defined as above, and the simulation was advanced by monitoring first integrals
and the orbital stability norm defined by (\ref{OSN_discreta}). In particular, several numerical experiments were carried out by varying $\delta r$; corresponding $\delta_{OS}^n$ trends are
reported in Figure \ref{fig_nos_vortici}~(b).

\begin{figure}[h!]
    \begin{center}
    \subfigure[]
    {\includegraphics[width=8.0cm,height=6cm]{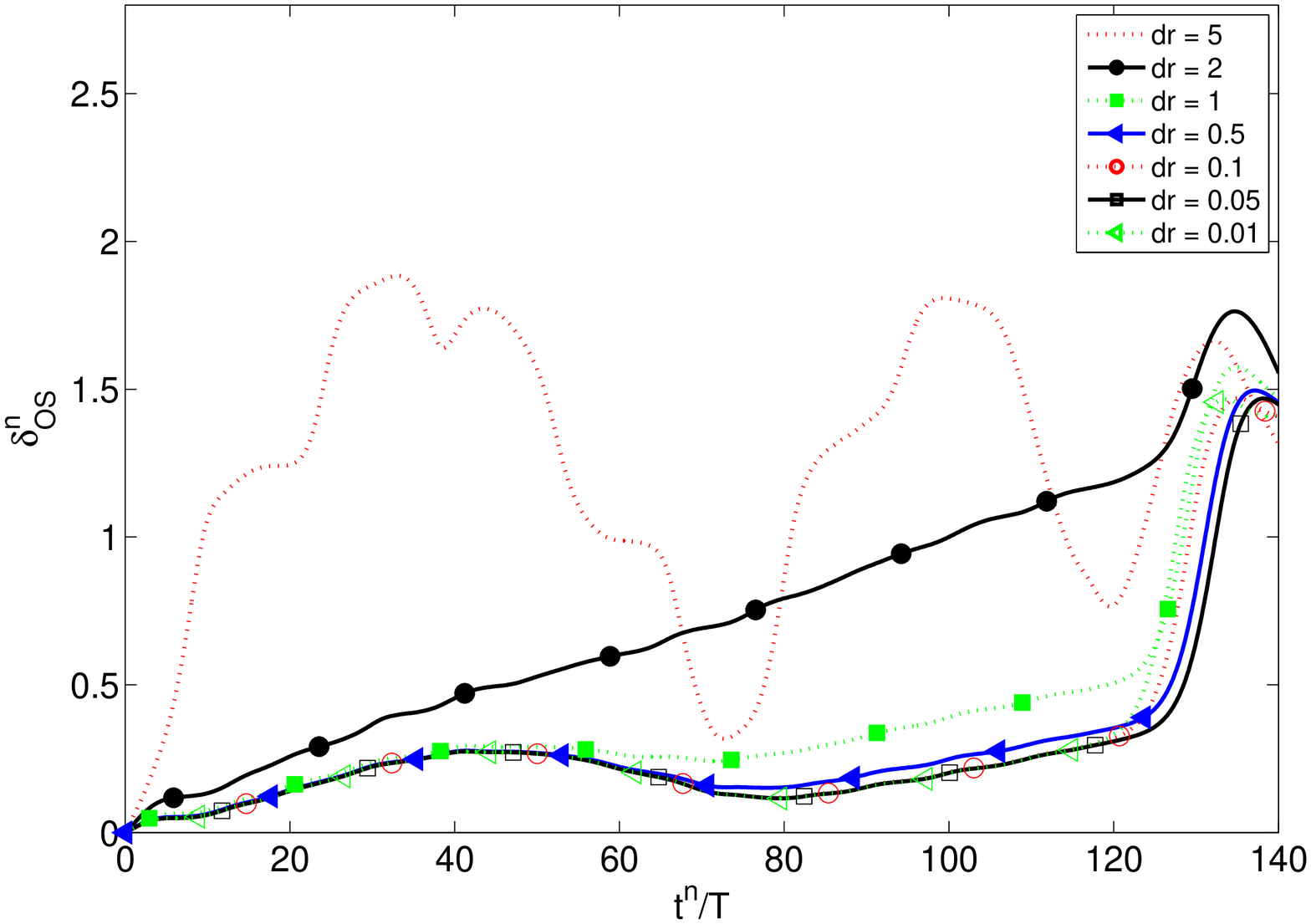}}
    \subfigure[]
    {\includegraphics[width=8.0cm,height=6cm]{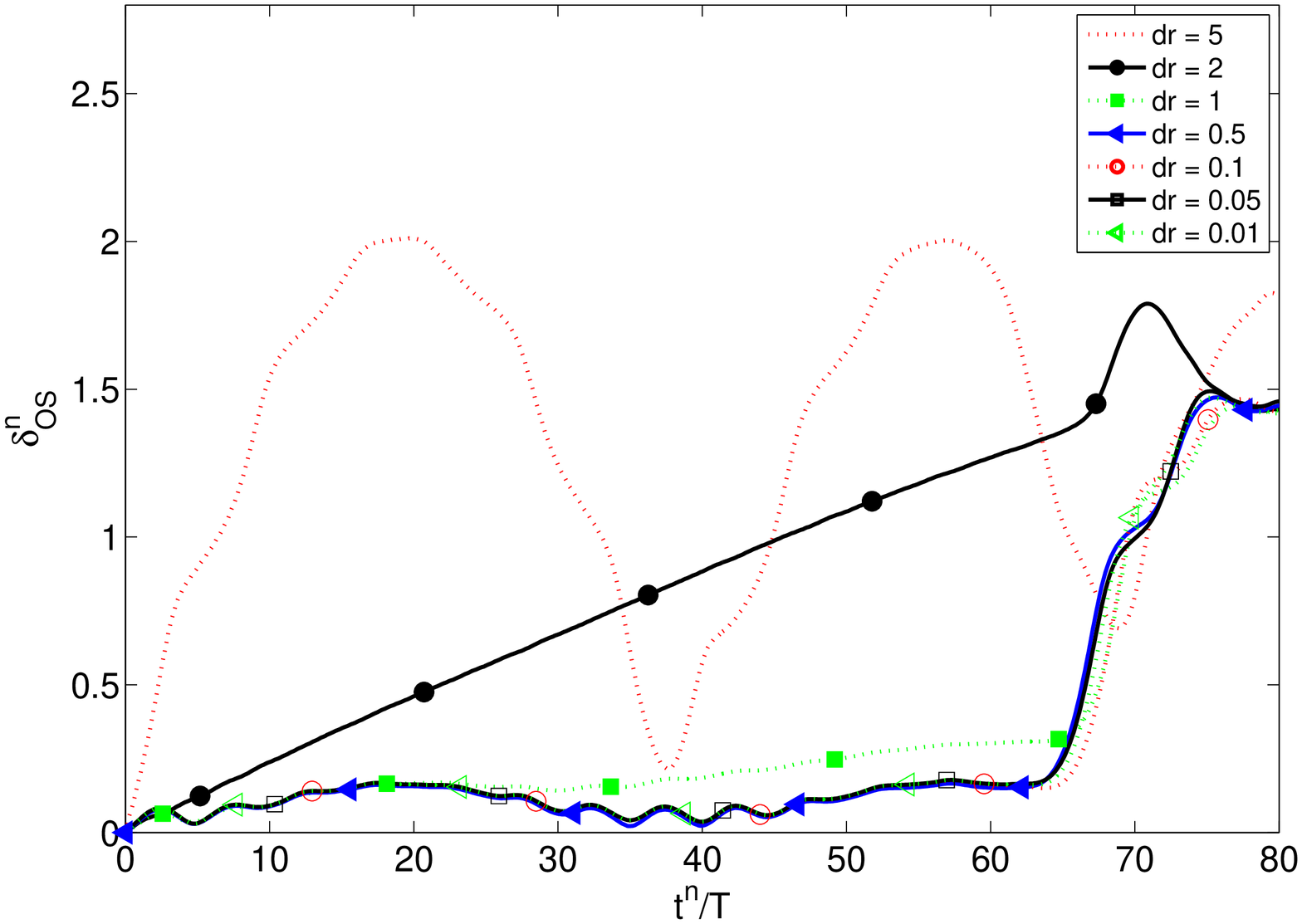}}
    \caption{Exemplificative trends of $\delta_{OS}^n$ versus time $t^n = n\, \delta t$ for
             hylomorphic vortices with $\ell=4$,
             having fixed $h=100$ (a) and $h=500$ (b); $W$ defined as in (\ref{ex_W_gamma}).
             Time non-dimensionalised through the soliton period $T$.
             Each curve is associated with a grid; corresponding characteristic size $\delta r$ was varied in
             $\{5,2,1,0.5,0.1,0.05,0.01\}$. The obtained trends are remarkably different from the ones in
             Figure \ref{fig_nos_qball}.}
    \label{fig_nos_vortici}
    \end{center}
\end{figure}

\noindent 
Conservation of NKG first integrals was systematically monitored for all the simulated vortices.
For instance, energy variation relative to its time-averaged value was in the range
$(-1,8)\cdot 10^{-2}$, $(0,5)\cdot 10^{-3}$ and $(-1,1)\cdot 10^{-5}$,
when respectively adopting $\delta r=5$, $1$ and $0.1$.
Corresponding ranges for charge were
$(-12,1)\cdot 10^{-2}$, $(-1,1)\cdot 10^{-6}$ and $(-2,2)\cdot 10^{-10}$;
corresponding ranges for the angular momentum were
$(-15,5)\cdot 10^{-2}$, $(-1,1)\cdot 10^{-6}$ and $(-2,2)\cdot 10^{-8}$.
These results further supported the adoption of the above described numerical scheme, and gave us
confidence in the obtained $\delta_{OS}^n$ trends.
On regard, no stable dynamics was observed: vortex rupture occurred for all the considered test-cases,
at the latest nearly $t/T = 63$ on the most refined grids (such a threshold is clearly visible in the figure).
A rough discretization like the one associated with $\delta r = 5$, for instance, immediately caused a phase lag
between numerical solution and theoretical vortex, which in turn produced the corresponding periodic $\delta_{OS}^n$ trend
in Figure \ref{fig_nos_vortici}~(b). 
Moreover, perturbation reduction (as induced by grid refinement) did not prevent vortex rupture; most importantly,
not even it affected rupture time (e.g. by introducing incremental delays).
This aspect is most evident in the figure: rupture time was not significantly affected by a reduction of two
orders of magnitude in the grid characteristic size (say, from $\delta r=1$ down to $0.01$). 
Such results are completely different from the ones observed for hylomorphic solitons, i.e. in the orbitally stable case.
Nonetheless, the aforementioned numerical experiments were repeated by considering a vortex with $\ell=4$ and charge $h=100$,
in order to possibly corroborate the preliminarily obtained results.
Corresponding trends for $\delta_{OS}^n$ are reported in Figure \ref{fig_nos_vortici}~(a):
they fully confirm those observed for $h=500$. This point, together with the fact that rupture occurred
nearly $t/T = 125$ for such a smaller charge value, incidentally suggests that no trivial correlation between charge and
orbital stability might be inferred at the present research stage.
Overall, the carried out numerical simulations suggest that vortices may be unstable.

\section{Acknowledgements}
The authors would like to thank Francesca Guerra for her support
in carrying out the numerical simulations. J.B., V.B. and C.B. are sponsored by project MIUR - PRIN2009 ``Variational and topological methods in the study of nonlinear phenomena", Italy. C.B. is sponsored also by the ``Distinguished Scientist Fellowship Program (DSFP)", King Saud University, Riyadh, Saudi Arabia.


\bigskip

\end{document}